\newtheorem{dummy}{dummy}[section]
\newtheorem{theorem}[dummy]{Theorem}
\newtheorem{proposition}[dummy]{Proposition}
\theoremstyle{definition}
\newtheorem{definition}[dummy]{Definition}
\newtheorem{example}[dummy]{Example}
\newtheorem{remark}[dummy]{Remark}
\newcommand{\cC}{\mathcal{C}}
\newcommand{\cL}{\mathcal{L}}
\newcommand{\bC}{\mathbb{C}}
\newcommand{\bD}{\mathbf{D}}
\newcommand{\bQ}{\mathbb{Q}}
\newcommand{\bR}{\mathbb{R}}
\newcommand{\Fun}{\mathrm{Fun}}
\newcommand{\alg}{\mathrm{alg}}
\newcommand{\CC}{\mathit{CC}}
\newcommand{\BM}{\mathrm{BM}}
\newcommand{\RGa}{\mathbf{R}\Gamma}
\newcommand{\Vect}{\mathit{Vect}}
\newcommand{\fj}{\mathfrak{j}}
\begin{document}

\title{Constructible functions and Lagrangian cycles on orbifolds}
\author{Davesh Maulik and David Treumann}
\maketitle

\begin{abstract}
Given a smooth manifold $M$, Kashiwara's index formula expresses the weighted Euler characteristic of a constructible function in terms of its characteristic cycle.  In this note, we generalize this formula to the case when $M$ is a smooth orbifold, answering a question of Behrend.
\end{abstract}

\tableofcontents

\section{Introduction}

Let $M$ be a smooth compact manifold with cotangent bundle $T^*M$, and let $\zeta_M \subset T^*M$ denote the zero section.  It is a classical result that the homological intersection number $\zeta_M \cap \zeta_M$ is equal to the Euler characteristic of $M$.  The ``index formula'' \cite{BDK,K} is a generalization to ``weighted'' Euler characteristics.

\begin{theorem}[Kashiwara index formula]
\label{thm:intro1}
Let $M$ be a compact smooth manifold.  Let $f$ be a constructible function on $M$, and let $\CC(f)$ be its characteristic cycle in $T^*M$.  Let $\zeta_M$ denote the zero section in $T^* M$.  The intersection number $\CC(f) \cap \zeta_M$ and the weighted Euler characteristic $\int f$ coincide:
\[
\int f = \CC(f) \cap \zeta_M
\]
\end{theorem}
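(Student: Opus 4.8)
The plan is to read the right-hand side as a genuine homological intersection number in $T^*M$ and to compute it after a convenient deformation of the zero section. Since $M$ is compact, $\CC(f)$ is a closed conic Lagrangian cycle and $\zeta_M$ is a compact Lagrangian, so the intersection is proper and $\CC(f)\cap\zeta_M$ depends only on the class of $\zeta_M$ near the (compact) support of the intersection. I would first record that both sides are additive in $f$: the left side because $\int(f+g)=\int f+\int g$, and the right side because $\CC$ is additive and homological intersection with the fixed class $[\zeta_M]$ is linear. This lets me assume $f=\chi(F)$ is the local Euler characteristic of a single bounded constructible complex $F$, fix a stratification adapted to $F$, and—when convenient—reduce to $f$ supported on the closure of a single stratum.

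Next I would replace $\zeta_M$ by a homologous cycle that meets $\CC(f)$ cleanly. For a smooth function $\phi\colon M\to\bR$, the graphs $\Gamma_{t\,d\phi}=\{(x,t\,d\phi_x)\}$ give a homotopy from $\zeta_M$ (at $t=0$) to $\Gamma_{d\phi}$, and compactness of $M$ together with the conic structure of $\CC(f)$ keeps the intersection inside a fixed compact set throughout the homotopy, so $\CC(f)\cap\zeta_M=\CC(f)\cap\Gamma_{d\phi}$. I would then take $\phi$ to be a stratified Morse function. For such $\phi$ the locus $\CC(f)\cap\Gamma_{d\phi}$ is concentrated at the finitely many critical points $p$, i.e.\ the points where $d\phi_p$ lies in $T^*_S M$ for the stratum $S$ through $p$.

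The heart of the argument is the local computation: at each critical point $p$ on a stratum $S$, I expect the local intersection multiplicity of $\CC(f)$ with $\Gamma_{d\phi}$ to equal the local Morse contribution of $f$ at $p$, that is, the jump in the weighted Euler characteristic as the sublevel set $\{\phi\le c\}$ crosses the critical value $c=\phi(p)$. Concretely this is the multiplicity of $\CC(f)$ along $T^*_S M$ weighted by the normal and tangential Morse indices furnished by stratified Morse theory (in the sense of Goresky–MacPherson). Summing over all critical points, the right side becomes $\sum_p(\text{local jump})$, while the left side equals the same sum because the jumps telescope: the sublevel sets interpolate from the empty set to all of $M$, so the accumulated jumps reconstruct $\int f$.

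The main obstacle is the local matching in the third step—identifying the topological intersection multiplicity of the two Lagrangians at $p$ with the combinatorial local Morse datum of $f$. This requires a normal-slice analysis splitting the contribution into a normal Morse datum transverse to $S$ and tangential data along $S$, together with careful orientation and sign bookkeeping so that the intersection index carries the correct sign $(-1)^{\dim S}$ and matches the jump in $\int f$. A secondary technical point is making the deformation invariance rigorous—confirming properness of $\CC(f)\cap\Gamma_{t\,d\phi}$ for all $t$—for which the compactness hypothesis on $M$ is essential.
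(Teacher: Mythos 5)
A point of orientation first: the paper never proves Theorem \ref{thm:intro1}. It is quoted as a classical result of Brylinski--Dubson--Kashiwara and Kashiwara \cite{BDK,K}, restated in Section 2 with a pointer to the history at the end of \cite[Chapter 9]{KS}; the paper's own contribution is the orbifold generalization, obtained by cosheaf-theoretic reduction to this classical statement. So your proposal can only be measured against the standard proofs in the cited literature --- and the route you outline (reduce by additivity, deform $\zeta_M$ to the graph $\Gamma_{d\phi}$ of a stratified Morse function, match local intersection numbers with local Morse-theoretic jumps, telescope over critical values) is exactly that classical route, essentially Kashiwara's argument in the form presented in \cite[Chapter 9]{KS}.

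As a proof, however, your text is incomplete in precisely the place where the theorem lives. The ``local matching'' you defer --- that at a nondegenerate stratified critical point $p$ of $\phi$ on a stratum $S$, the local intersection multiplicity of $\CC(f)$ with $\Gamma_{d\phi}$ equals the jump of $t \mapsto \int_{\{\phi \le t\}} f$ across the critical value $\phi(p)$ --- is not a technical verification to be postponed; it is the microlocal Morse lemma / purity statement (in the sheaf-theoretic formulation, \cite[Proposition 9.5.1 and the proof of Theorem 9.7.11]{KS}), and all the substantive content of the index formula is concentrated there. Establishing it requires the normal-slice analysis of stratified Morse theory, the identification of the multiplicity of $\CC(f)$ along $T^*_S M$ with the Euler characteristic of a normal Morse datum, and the sign and orientation conventions (including the $(-1)^{\dim S}$ you mention) fixed consistently --- none of which your proposal carries out. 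Your surrounding steps are sound: additivity of both sides is clear; the deformation invariance $\CC(f)\cap \zeta_M = \CC(f)\cap \Gamma_{t\,d\phi}$ is unproblematic because $M$ is compact and $\CC(f)$ is conic, so the intersections stay in a fixed compact set throughout the homotopy; the existence of a stratified Morse function is standard (Goresky--MacPherson); and the telescoping of local jumps does recover $\int f$. So what you have is a correct roadmap that reduces the theorem to its hardest lemma, but it does not discharge that lemma; to stand as a proof it would need the local computation carried out in full, or an explicit appeal to the purity theorem in the literature.
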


The classical result is the special case when $f$ is identically 1.  We will recall the definitions later.  

The purpose of this note is to generalize Theorem \ref{thm:intro1} to
the case when $M$ is a smooth orbifold.  The problem of doing so arose
in Behrend's work \cite{Behrend} on Deligne-Mumford stacks
equipped with a symmetric obstruction theory.    For a
large class of such stacks, he used Theorem \ref{thm:intro1} to
express the degree of the virtual fundamental class in terms of the
weighted Euler characteristic of a certain constructible function (see
Section \ref{refinedbehrend}).  This ``Behrend function'' has been
central in recent progress on Donaldson-Thomas theory of Calabi-Yau
threefolds \cite{Br,JS}.

Behrend already observed in loc. cit. that the generalization of
Theorem \ref{thm:intro1} to orbifolds would allow one to remove most
of the restrictions on $X$.  We expect this to have applications in
Donaldon-Thomas theory for more general geometries, where nontrivial orbifold structure arises
naturally, e.g. when studying sheaves on threefolds relative to divisors.

The problem of generalizing Theorem \ref{thm:intro1} at least has the potential to be somewhat subtle.  For instance, when $M$ is an orbifold the homological intersection $\zeta_M \cap \zeta_M$ is usually a rational number, not an integer.  In particular it can't be a finite alternating sum of betti numbers, so one has to be careful about what's meant by Euler characteristic.  Moreover the study of Morse theory and deeper ``quantum'' questions about Lagrangian intersections are expected to give rise to new phenomena in the orbifold case.  

Nevertheless in this paper we show that a suitable orbifold version of the index formula is essentially a formal consequence of the usual index formula.  To do this requires us to study the local-to-global behavior of constructible functions and Lagrangian cycles.  The fact that both quantities in the formula are ill-defined for non-compactly supported functions is a small obstacle to doing this.  To overcome this obstacle we use the fact that constructible functions with compact support, and Lagrangian cycles with compact ``horizontal'' support, form \emph{cosheaves}.

\subsection{Notation}

$\Vect$ denotes the abelian category of rational vector spaces.  We write $T^*M$ for the cotangent bundle of $M$.  When $N$ is a submanifold of $M$ we write $T^*_N M$ for the conormal bundle to $N$ in $M$.  We write $H_i, H_i^{\BM}, H^i$ and $H^i_c$ for the functors of respectively homology, Borel-Moore homology, cohomology, and compactly suported cohomology, which we consider with rational coefficients.

\subsection*{Acknowledgements}

We thank Kai Behrend, Cheol-Hyun Cho and Pierre Schapira for comments on an earlier draft.  We are grateful to Dima Tamarkin for teaching us about cosheaves.  D.M. is partially supported by a Clay Research Fellowship.

\section{Constructible functions and Lagrangian cycles on manifolds}

In this section we review the theory of constructible functions and characteristic cycles.  We mostly follow \cite[Chapter 9]{KS}.

We will consider subanalytic Whitney stratifications of real analytic manifolds $M$, which form a partially ordered set under refinement.  An essential result is that this poset if filtered: any two subanalytic Whitney stratifications have a common refinement (\cite[Theorem 8.3.20]{KS}).  As a consequence if $G$ is a finite group acting on $M$ then any stratification of $M$ can be refined to a $G$-invariant stratification.


If $S$ is a Whitney stratification, we write $\Lambda_S \subset T^*M$ for its characteristic variety, thus
$$\Lambda_S = \bigcup_{N \in S} T_N^*M$$
where $N$ runs through the strata of $S$.

\subsection{Constructible functions and Euler integration}

Let $S$ be a subanalytic Whitney stratification of $M$.  We will call a compactly supported function $f:M \to \bQ$ $S$-constructible if it is constant along the strata of $S$.  Write $\Fun_c(M,S)$ for the group of compactly supported $S$-constructible functions, and
$$\Fun_c(M) = \bigcup_S \Fun_c(M,S)$$
We define an operator $\int:\Fun_c(M) \to \bQ$ as follows:
\begin{definition}
For $f \in \Fun_c(M)$, we set
$$\int_M f = \sum_{t\in \bQ} \chi_c(f^{-1}(t)) \cdot t$$
Note that as $f$ is constructible this sum is finite.
\end{definition}

\subsection{Lagrangian cycles and intersection}
\label{subsec:lcai}

For each stratification $S$ we define the group of \emph{conic Lagrangian cycles} supported on $\Lambda_S$ to be $H_n^{\BM}(\Lambda_S)$.   As $\Lambda_S$ is $n$-dimensional, a member of this group is given by choosing for each component of $\Lambda_S^\circ \subset \Lambda_S$ (the smooth points of $\Lambda_S$) an orientation and multiplicity, satisfying a linear condition.  We define the ``support'' of the cycle to be the closure of the components with nonzero multiplicities.

Write $\cL(M)$ for the group of Lagrangian cycles on $T^*M$ that are supported on the conormal variety of \emph{some} subanalytic Whitney stratification :
$$\cL(M) = \bigcup_S H_n^\BM(\Lambda_S)$$
We will say that a Lagrangian cycle has \emph{compact horizontal support} if the projection of its support to $M$ is compact.  These form a subgroup $\cL_c(M) \subset \cL(M)$

Let $u:M \to \bR$ be a smooth function (it is not necessary for it to be subanalytic) and let $\Gamma \subset T^*M$ be the graph of its derivative, i.e.
$$\Gamma = \{(x,\xi) \in T^*M \mid \xi = du_x\}$$
Let us pick an orientation of $M$.  If $\Gamma \cap \Lambda_S = \Gamma \cap \Lambda^\circ_S$, these intersection points are isolated, and the intersection is transverse, then we can define a local intersection number $(\eta \cap \Gamma)_x \in \bQ$ for any $\eta$ supported on $\Lambda_S$.  If furthermore $\eta$ has compact horizontal support then the sum
$$\sum_{x \in \Lambda_S \cap \Gamma} (\eta \cap \Gamma)_x$$
is finite and independent of the $u$ chosen.  We denote the map $\cL_c(M) \to \bQ$ by $\cap \zeta_M$, where $\zeta_M$ denotes the zero section of $T^*M$ endowed with an orientation.

\subsection{The characteristic cycle of a constructible function}

If $S$ is a stratification of $M$ and $f:M \to \bQ$ is a $S$-constructible function, then we may define a element $\CC(f) \in H_n^\BM(\Lambda_S)$, called the \emph{characteristic cycle} of $f$.  See \cite[Chapter 9]{KS} for details.  The support of $\CC(f)$ projects onto the support of $f$, so $\CC$ carries $\Fun_c(M)$ to $\cL_c(M)$.  
\begin{theorem}[{\cite[Theorems 9.7.1, 9.7.10]{KS}}]
\label{thm:charcycleiso}
For each real analytic manifold $M$, the map $\CC:\Fun_c(M) \to \cL_c(M)$ is an isomorphism.
\end{theorem}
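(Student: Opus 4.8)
This statement is precisely \cite[Theorems 9.7.1, 9.7.10]{KS}, so the plan is to recall how those results fit together, organizing the argument so that it reduces to a single stratification followed by a triangularity computation. The first step is to pass to a fixed stratification. Both $\Fun_c(M)=\bigcup_S \Fun_c(M,S)$ and $\cL(M)=\bigcup_S H_n^{\BM}(\Lambda_S)$ are filtered unions: if $S'$ refines $S$ then $\Lambda_S\subseteq\Lambda_{S'}$, the resulting closed inclusion induces compatible maps $\Fun_c(M,S)\hookrightarrow\Fun_c(M,S')$ and $H_n^{\BM}(\Lambda_S)\to H_n^{\BM}(\Lambda_{S'})$ (injective on top-degree Borel--Moore homology), and $\CC$ commutes with these transition maps. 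Since any two stratifications admit a common refinement by \cite[Theorem 8.3.20]{KS}, it suffices to prove that $\CC:\Fun_c(M,S)\to H_n^{\BM}(\Lambda_S)$ is an isomorphism for each single $S$; the isomorphism on $\cL_c(M)$ then follows by taking the filtered colimit.

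At a fixed $S$ I would exhibit compatible generating sets on the two sides and argue by triangularity. The group $\Fun_c(M,S)$ is spanned by the indicator functions $\mathbf{1}_N$ of the strata $N$ of compact closure. On the other side, a conic Lagrangian cycle in $H_n^{\BM}(\Lambda_S)$ is determined by the multiplicities it assigns to the top-dimensional components $T^*_N M$ of $\Lambda_S^\circ$, because $\Lambda_S\setminus\Lambda_S^\circ$ has dimension $<n$ so that restriction to the smooth locus is injective; these multiplicities are constrained by the linear closure condition. The key local input, imported from \cite{KS}, is that $\CC(\mathbf{1}_N)$ has leading term $\pm[T^*_N M]$ with unit multiplicity, while every remaining term is supported on a conormal $T^*_{N'}M$ with $N'\subset\overline{N}\setminus N$ of strictly smaller dimension. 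Ordering the strata by dimension, the matrix of $\CC$ relative to these generators is therefore triangular with $\pm1$ on the diagonal, hence invertible, which yields both injectivity and surjectivity at level $S$.

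The main obstacle is exactly this local computation of $\CC(\mathbf{1}_N)$: one must verify that the leading coefficient is a unit and that the correction terms are genuinely lower in a dimension filtration shared by both sides. Equivalently, the heart of the matter is the construction of the inverse to $\CC$, which is Kashiwara's microlocal index formula: the function $x\mapsto(\eta\cap\Gamma)_x$, recovered from a cycle $\eta$ by local intersection against the graph $\Gamma$ of a generic Morse function near $x$, is constructible, and applying $\CC$ to it returns $\eta$. Establishing that this local index genuinely inverts $\CC$ is the deep analytic input. The remaining bookkeeping---tracking compact-support conditions when $M$ is noncompact, and matching orientation and sign conventions so that $\CC$ is an honest group homomorphism---is routine by comparison.
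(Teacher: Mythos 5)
The paper does not actually prove this statement---it is imported verbatim as a citation of \cite[Theorems 9.7.1, 9.7.10]{KS}, so there is no internal proof to compare against. Your reconstruction is correct and is essentially the argument underlying the cited theorems: the reduction to a single stratification via the filtered union (using common refinements and injectivity of $H_n^{\BM}(\Lambda_S)\to H_n^{\BM}(\Lambda_{S'})$), followed by the triangularity of $\CC$ with respect to the dimension filtration, with $\CC(\mathbf{1}_N)=\pm[T^*_NM]$ plus terms on conormals to boundary strata, together with the compact-support bookkeeping you flag, is exactly how the isomorphism is established in \cite{KS}.
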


\begin{theorem}[Index formula]
For each $M$, the triangle
$$\xymatrix{
\Fun_c(M) \ar[rr]^{\CC} \ar[dr]_{\int} &  &  \cL_c(M) \ar[dl]^{\cap \zeta} \\ 
& \bQ
}
$$
commutes

\end{theorem}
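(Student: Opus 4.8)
The plan is to exploit that both $\int$ and $(-)\cap\zeta_M$ are $\bQ$-linear functionals $\Fun_c(M)\to\bQ$ and that $\CC$ is linear, so it suffices to verify $\int f=\CC(f)\cap\zeta_M$ on a spanning set of $\Fun_c(M)$. Fixing a stratification $S$ for which $f$ is constructible, the space $\Fun_c(M,S)$ has the indicator functions $\mathbf{1}_N$ of the strata as a basis, and writing $\mathbf{1}_N=\mathbf{1}_{\overline N}-\mathbf{1}_{\overline N\setminus N}$, where by the frontier condition $\overline N\setminus N$ is a union of strata of strictly smaller dimension, a downward induction on dimension shows that $\Fun_c(M,S)$ is spanned by the functions $\mathbf{1}_Z$ with $Z=\overline N$ compact and subanalytic. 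For such $Z$ the left-hand side is transparent: $\int\mathbf{1}_Z=\chi_c(Z)=\chi(Z)$ since $Z$ is compact. Thus the theorem reduces to the assertion $\CC(\mathbf{1}_Z)\cap\zeta_M=\chi(Z)$.

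To compute the right-hand side I would unwind the definition of $\cap\zeta_M$ from Section~\ref{subsec:lcai}: choose a smooth $u:M\to\bR$ whose differential graph $\Gamma=\{(x,du_x)\}$ meets $\Lambda_S$ transversally at finitely many isolated points, all lying in the smooth locus $\Lambda_S^\circ$. Such $u$ exist generically, and $u$ may moreover be arranged to be a stratified Morse function in the sense of Goresky--MacPherson. Then $\CC(\mathbf{1}_Z)\cap\zeta_M=\sum_x(\CC(\mathbf{1}_Z)\cap\Gamma)_x$, the sum running over the stratified critical points $x$ of $u$. The independence of this sum from the choice of $u$ is already granted in the setup, which affords us the freedom to work with the most convenient Morse function.

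The key step, and the main obstacle, is the local identity
\[
(\CC(\mathbf{1}_Z)\cap\Gamma)_x \;=\; \chi\bigl(B_x\cap Z\cap\{u\le u(x)\}\bigr)-\chi\bigl(B_x\cap Z\cap\{u< u(x)\}\bigr),
\]
where $B_x$ is a small ball around $x$; that is, the local intersection multiplicity at a nondegenerate stratified critical point equals the localized jump in the Euler characteristic of the sublevel sets of $u|_Z$. This is exactly where the construction of the characteristic cycle enters: the multiplicity of $\CC(\mathbf{1}_Z)$ along a conormal $T^*_NM$ is built from the normal Morse data of $\mathbf{1}_Z$ transverse to $N$, and matching it to the cycle-theoretic local intersection number is the content of the microlocal index theorem of \cite[Chapter 9]{KS}. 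I expect the sign conventions---reconciling the chosen orientation of $\zeta_M$ with the coorientation implicit in $du_x$ and with the Morse index---to be the fussiest part of this step.

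Granting the local identity, the global conclusion is a telescoping argument. Ordering the finitely many critical values of $u|_Z$ and summing the local jumps over all $x$, the intermediate contributions cancel by stratified Morse theory, leaving $\chi(Z\cap\{u\le c\})-\chi(Z\cap\{u< c'\})$ for $c$ larger than every critical value and $c'$ smaller than every one; the first term is $\chi(Z)$ and the second is $\chi(\emptyset)=0$. Hence $\sum_x(\CC(\mathbf{1}_Z)\cap\Gamma)_x=\chi(Z)$, which together with the reduction of the first paragraph gives $\CC(f)\cap\zeta_M=\int f$ for all $f\in\Fun_c(M)$, so the triangle commutes.
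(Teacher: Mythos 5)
The first thing to say is that the paper contains no proof of this statement. Immediately after the theorem, the text reads: ``See the remarks at the end of \cite[Chapter 9]{KS} for a history of this result. In the real setting we are working in, the theorem is due to Kashiwara \cite{K}.'' The index formula is imported as classical input; the paper's actual contribution is the cosheaf-theoretic localization of it and the orbifold generalization. So there is no proof in the paper to measure your attempt against, only a citation.

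Judged on its own terms, your proposal reproduces the standard architecture of the classical proof, and the parts you actually argue are correct: the spanning of $\Fun_c(M,S)$ by indicator functions $\mathbf{1}_{\overline N}$ of compact subanalytic sets (only finitely many strata meet the compact support, and the frontier condition drives the downward induction), the identity $\int \mathbf{1}_Z = \chi_c(Z) = \chi(Z)$ for $Z$ compact, and the telescoping of local Euler-characteristic jumps over the critical values of a proper stratified Morse function. However, you should be candid that your ``key step'' --- the equality of the local intersection number $(\CC(\mathbf{1}_Z)\cap \Gamma)_x$ with the local jump of Euler characteristics of sublevel sets --- is not proven but cited from ``the microlocal index theorem of \cite[Chapter 9]{KS}.'' That local identity is the entire analytic content of the theorem: it is the one place where the construction of $\CC$ is actually confronted with intersection multiplicities, and it is precisely the result from which \cite{KS} deduces the global formula by the same telescoping you perform. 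So what you have written is a correct derivation of the global index formula from Kashiwara--Schapira's local one --- legitimate as a reduction, and in fact it mirrors how the proof is organized in the literature --- but it is not an independent proof; as a self-contained argument it has a gap exactly at its center, and in practice it defers the hard part to \cite{KS} in much the same way the paper defers the whole statement to \cite{K}.
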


See the remarks at the end of \cite[Chapter 9]{KS} for a history of this result.  In the real setting we are working in, the theorem is due to Kashiwara \cite{K}.

\section{Cosheaves}

It is intuitively plausible that constructible functions, conic Lagrangian cycles, and the index formula have a local nature on $M$.  A typical way to express this locality is using the theory of sheaves, but for the purposes of this paper we have found that the equivalent but less well-known theory of \emph{cosheaves} to be more convenient.  Actually we will not work with cosheaves on a single manifold but on a Grothendieck site containing all manifolds of a fixed dimension.  For simplicity we work with oriented manifolds.

\subsection{The subanalytic Grothendieck site}

Fix a dimension $n$, and let $\cC$ be the category whose objects are real analytic manifolds equipped with an orientation, and whose morphisms are real subanalytic maps that are open immersions.  We endow $\cC$ with the following Grothendieck topology: if $\{U_i \to U\}_{i \in I} \subset \cC_{/U}$ is a sieve in $\cC$, we will say it is an \emph{covering sieve} if there there is a finite list $U_{i_0} \to U,\ldots,U_{i_m} \to U$ so that every point of $U$ belongs to one of the $U_i$.

\begin{remark}
\label{rem:fincond}
This is the same topology considered in \cite{KS2}.  The finiteness condition makes this \emph{different} than the usual topology on real analytic manifolds.  For instance, the collection of intervals $(\frac{1}{n},1)$ do not cover the interval $(0,1)$ in the topology we are considering.  
\end{remark}

By a \emph{pre-cosheaf} on $\cC$ we will mean a covariant functor $F:\cC \to \Vect$.  A pre-cosheaf $F$ is a cosheaf if the natural map
$$\varinjlim_{i \in I} F(U_i) \to F(U)$$
is an isomorphism whenever $\{U_i\}_{i \in I}$ is a covering sieve of $U$.  

\begin{remark}
\label{rem:dereq}
There is a dictionary between sheaves and cosheaves: if $F$ is a sheaf on a site $S$, then the assignment $U \mapsto \RGa_c(U,F)$ is \emph{co}variant for open inclusions, and we may interpret $\RGa_c(-,F)$ as a complex of cosheaves.  When $S$ is a locally compact Hausdorff space, it can be shown that the functor $F \mapsto \RGa_c(-,F)$ induces an equivalence of derived categories.  We do not know whether this is true for $S = \cC$, but we will see that some of the cosheaves we consider arise from sheaves in this way.
\end{remark}

Recall that each presheaf $P$ has a sheafification $P^\dagger$, and there is a natural map $P \to P^\dagger$ that is universal among maps from $P$ to sheaves.  Similarly, each pre-cosheaf has a natural cosheafification $P_\dagger \to P$ that is universal among maps from cosheaves to $P$.  Our main example is the \emph{constant cosheaf}, which is the cosheafification of the constant pre-cosheaf given by
$$P(U) = \bQ$$
We denote the constant cosheaf by $\bD$, because in the derived equivalence of \ref{rem:dereq} it corresponds to the Verdier dualizing complex.

\subsection{The cosheaf of constructible functions}

To each subanalytic inclusion $U \hookrightarrow V$, extension by zero provides an inclusion $\Fun_c(U) \hookrightarrow \Fun_c(V)$.  In other words, $\Fun_c$ is a pre-cosheaf on $\cC$.

\begin{proposition}
\label{prop:funccosheaf}
$\Fun_c$ is a cosheaf on $\cC$.
\end{proposition}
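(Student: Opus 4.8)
The plan is to verify the cosheaf axiom directly: for a covering sieve $\{U_i \to U\}_{i \in I}$, I must show that the natural map $\varinjlim_{i} \Fun_c(U_i) \to \Fun_c(U)$ is an isomorphism. Since covering sieves contain a finite subcover $U_{i_0}, \ldots, U_{i_m}$ by definition, and since the colimit is filtered (it is indexed by the sieve, which is closed under the open immersions of $\cC$ and hence directed by taking unions), it suffices to analyze the situation through finite unions of the $U_i$. First I would set up the colimit explicitly. A general element of $\varinjlim_i \Fun_c(U_i)$ is represented by a compactly supported constructible function $g$ on some $U_i$, and two such representatives $g$ on $U_i$ and $g'$ on $U_j$ are identified if they agree after extension by zero into some common $U_k$ in the sieve. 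The map to $\Fun_c(U)$ sends $g$ to its extension by zero on $U$.

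Surjectivity is the geometrically substantive point. Given $f \in \Fun_c(U)$, its support $K = \mathrm{supp}(f)$ is compact, so by the finiteness built into the topology I can cover $K$ by finitely many of the $U_i$, say $U_{i_0}, \ldots, U_{i_m}$. The difficulty is that $f$ need not be supported in any single $U_{i_k}$, so I cannot simply lift it to one term of the colimit; I must instead decompose $f$ as a sum of pieces, each supported in one open set, and show the pieces live in the colimit. Concretely, I would choose a partition of the compact set $K$ subordinate to the cover — for constructible functions the natural device is a subanalytic stratification refining both a stratification adapted to $f$ and the sets $U_{i_0}, \ldots, U_{i_m}$, which exists by \cite[Theorem 8.3.20]{KS}. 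Using this, I can write $f = \sum_{k} f_k$ where each $f_k$ is $\mathbf{k}$-valued, compactly supported, constructible, and supported inside $U_{i_k}$: e.g. assign each stratum of the refinement to the first $U_{i_k}$ containing it and let $f_k$ be the restriction of $f$ to that union of strata. Each $f_k$ then defines an element of $\Fun_c(U_{i_k})$ mapping to the corresponding summand of $f$, so their sum in the colimit maps to $f$. This establishes surjectivity.

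For injectivity I would take a class in the colimit mapping to zero in $\Fun_c(U)$, represented by some $g \in \Fun_c(U_i)$ whose extension by zero on $U$ vanishes. Extension by zero is injective on functions — it simply records the values of $g$, which are zero outside $U_i$ — so $g = 0$ as a function on $U_i$ already. Hence the class is already zero at the level of $U_i$, and a fortiori in the colimit. The only subtlety is checking that the colimit transition maps (extension by zero along inclusions $U_i \hookrightarrow U_k$) are themselves injective so that ``zero in the colimit'' is detected correctly; this is immediate since extension by zero never identifies distinct functions.

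The main obstacle is the surjectivity step, specifically producing the decomposition $f = \sum_k f_k$ with each piece supported in a single chart while remaining constructible and compactly supported. The right tool is the filteredness of the poset of subanalytic Whitney stratifications under refinement, quoted in the preamble, which guarantees a common refinement adapted simultaneously to $f$ and to the finite subcover; once such a stratification exists, the combinatorial assignment of strata to charts is routine and each resulting $f_k$ is automatically constructible and compactly supported because its support is a closed subanalytic subset of the compact set $K$ lying in $U_{i_k}$.
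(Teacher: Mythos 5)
Your proof has two genuine gaps, one structural and one technical. The structural one: your argument rests on the claim that the indexing category of the colimit --- the covering sieve --- is filtered, ``directed by taking unions,'' so that every element of $\varinjlim_I \Fun_c(U_i)$ is represented by a single function on a single chart. This is false: a sieve is closed under passing to \emph{smaller} opens, not larger ones, so the union of two charts in the sieve is generally not in the sieve. If the sieve is generated by two charts $U_1, U_2$ covering $U$, an upper bound of $U_1$ and $U_2$ inside the sieve would have to factor through $U_1$ or through $U_2$, forcing $U_2 \subset U_1$ or vice versa; in general no such bound exists, and a general element of the colimit is a sum $[g_1]+[g_2]$ with $g_k \in \Fun_c(U_k)$ that cannot be merged into one term. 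Consequently your injectivity argument (``each class is represented by a single $g$, and extension by zero is injective'') collapses. The actual content of injectivity is Mayer--Vietoris middle-exactness: if $g_1 + g_2 = 0$ on $U$ with $g_k$ supported in $U_k$, then $g_1 = -g_2$ is supported in $U_1 \cap U_2$, so both classes are images of classes from $\Fun_c(U_1 \cap U_2)$ and cancel in the colimit. This is precisely what the paper proves, after first reducing (via the finitely many maximal charts in the sieve and a standard induction) to the pushout $\Fun_c(U_1) \leftarrow \Fun_c(U_1 \cap U_2) \to \Fun_c(U_2)$; this step is absent from your proof.

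The technical gap is in your surjectivity decomposition. Assigning each stratum to the first chart $U_{i_k}$ \emph{containing} it does not make the piece $f_k$ compactly supported \emph{as a function on} $U_{i_k}$: the support of $f_k$, meaning the closure of its nonvanishing locus inside $U_{i_k}$, can fail to be compact. For example, with $U = \bR$, $U_1 = (-\infty,1)$, $U_2 = (-1,\infty)$, $f$ the indicator function of $[0,2]$, and a stratification containing the stratum $(0,1)$: this stratum lies in $U_1$, but the piece $1_{(0,1)}$ has support $[0,1)$ as a closed subset of $U_1$, which is not compact, so $1_{(0,1)} \notin \Fun_c(U_1)$. What is needed --- and what the paper arranges --- is a refinement of the stratification so that the \emph{closure} of each stratum meeting the support of $f$ lies entirely inside a single chart; then each indicator function genuinely belongs to the corresponding $\Fun_c(U_{i_k})$ and the decomposition of $f$ into such indicators proves surjectivity. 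Your final sentence asserts exactly the property that fails (``its support \dots lying in $U_{i_k}$''), so this is a real gap, though a fixable one.
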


\begin{proof}
Let $I = \{U_i \to M\}$ be a covering sieve of $M$.  We have to show that the natural map
$$c:\varinjlim_I \Fun_c(U_i) \to \Fun_c(M)$$
is an isomorphism.  By the definition of covering sieves in $\cC$, there are finitely many maximal charts $U_{i_1},\ldots U_{i_m}$ in the sieve $I$.  A standard argument reduces us to the case where $m = 2$, in which case the limit coincides with the pushout of the diagram 
$$\Fun_c(U_{i_1}) \leftarrow \Fun_c(U_{i_1} \cap U_{i_2}) \to \Fun_c(U_{i_2})$$
To prove the Proposition we therefore only have to show that if $M = U \cup V$ where $U$ and $V$ are subanalytic open charts, then the sequence
\[
\Fun_c(U \cap V) \stackrel{f \mapsto (f,f)}{\longrightarrow}  \Fun_c(U) \oplus \Fun_c(V) \stackrel{(f,g) \mapsto f - g}{\longrightarrow} \Fun_c(M) \to 0
\]
is exact.

If $f \in \Fun_c(U)$ and $g \in \Fun_c(V)$ and $f - g = 0$ in $\Fun_c(M)$, then $f = g$ and the common support must belong to $U \cap V$.  This shows that the sequence is exact in the middle.  Let us show that it is exact at the right.  Let $f:M \to \bQ$ be a compactly supported function constructible with respect to a subanalytic Whitney stratification $S$.  By refining $S$ if necessary, we may assume that the closure of each stratum of $S$ in the support of $f$ is entirely contained in either $U$ or $V$.  The indicator function of each such stratum belongs to either $\Fun_c(U)$ or $\Fun_c(V)$, and since $f$ is a linear combination of such indicator functions it belongs to the image of the map $\Fun_c(U) \oplus \Fun_c(V) \to \Fun_c(M)$.

\end{proof}

By definition, the map $\int$ is a morphism of pre-cosheaves from $\Fun_c$ to the constant pre-cosheaf.  By the Proposition, $\Fun_c$ is a cosheaf and so this map factors through the 
cosheafification $\bD$ of the constat pre-cosheaf.  We denote the map by $\int:\Fun_c \to \bD$.

\subsection{The cosheaf of Lagrangian cycles}

Each open immersion $U \hookrightarrow V$ induces an open immersion $T^*U \hookrightarrow T^*V$.  If the inclusion is subanalytic then we may extend any Whitney stratification of $U$ to one of $V$, and a horizontally compact conic Lagrangian cycle on $T^*U$ extends by zero to one on $T^*V$.  This gives us an inclusion $\cL_c(U) \hookrightarrow \cL_c(V)$, i.e. $\cL_c$ is a pre-cosheaf on $\cC$.

\begin{proposition}
$\cL_c$ is a cosheaf on $\cC$
\end{proposition}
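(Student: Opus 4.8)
The plan is to deduce the cosheaf property of $\cL_c$ directly from that of $\Fun_c$ (Proposition \ref{prop:funccosheaf}) by transporting it across the characteristic cycle isomorphism of Theorem \ref{thm:charcycleiso}. The essential point is that $\CC$ should be regarded not merely as an objectwise isomorphism $\CC_M \colon \Fun_c(M) \to \cL_c(M)$, but as an isomorphism of \emph{pre-cosheaves} on $\cC$, i.e. a natural isomorphism of functors $\cC \to \Vect$. Once that is in hand the conclusion is formal: a natural isomorphism identifies the two comparison maps, so for any covering sieve $\{U_i \to M\}$ it identifies $\varinjlim_i \cL_c(U_i) \to \cL_c(M)$ with $\varinjlim_i \Fun_c(U_i) \to \Fun_c(M)$, and the latter is an isomorphism precisely because $\Fun_c$ is a cosheaf.

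The real content, then, is the naturality of $\CC$ with respect to the extension-by-zero maps. Concretely, for a subanalytic open immersion $U \hookrightarrow V$ I would check that the square whose horizontal arrows are $\CC_U$ and $\CC_V$ and whose vertical arrows are the extension-by-zero inclusions $\Fun_c(U) \hookrightarrow \Fun_c(V)$ and $\cL_c(U) \hookrightarrow \cL_c(V)$ commutes. This I would reduce to a purely local statement, using that the characteristic cycle is built microlocally: for a point $p \in V$, the germ at $p$ of the characteristic cycle of a function depends only on the germ of that function near $p$. I would then split into cases. If $p \in U$, the germ of the extended function at $p$ agrees with the germ of the original $f$, so the germs of the two characteristic cycles agree. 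If $p \in V \setminus U$, I would invoke the compact support of $f$: its support is compact in $U$, hence closed in $V$, so the extension of $f$ vanishes on a neighborhood of $p$ and its characteristic cycle germ is zero there, which matches the extension by zero of $\CC(f)$. Since the two classes in $\cL_c(V)$ agree locally at every point, they coincide, which is exactly the commutativity of the square.

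The hard part is this last compatibility — that the characteristic cycle of an extension by zero is the extension by zero of the characteristic cycle — and in particular controlling the construction near the frontier $\overline{U} \cap V$. This is precisely where the restriction to compactly supported functions, equivalently to cycles with compact horizontal support, is doing the work: it guarantees that no mass of the cycle escapes to the boundary, so that extension by zero is well defined and compatible on both sides. Given the locality of $\CC$ recorded in \cite[Chapter 9]{KS} this step is immediate, and everything else in the argument is the formal transport of the cosheaf structure established for $\Fun_c$.
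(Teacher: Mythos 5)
Your proposal is correct and follows exactly the paper's own argument: the paper likewise deduces the cosheaf property of $\cL_c$ formally from Proposition \ref{prop:funccosheaf} via the pre-cosheaf isomorphism $\CC \colon \Fun_c \cong \cL_c$ of Theorem \ref{thm:charcycleiso}. The only difference is that you spell out the naturality of $\CC$ with respect to extension by zero, which the paper asserts without proof; your local verification of that compatibility is a reasonable filling-in of the detail the authors treat as clear.
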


\begin{proof}
It is possible to prove this directly, but let us simply note that by Theorem \ref{thm:charcycleiso}, there is an isomorphism of pre-cosheaves $\cL_c \cong \Fun_c$, and since $\Fun_c$ is a cosheaf by Proposition \ref{prop:funccosheaf}, $\cL_c$ is also a cosheaf.
\end{proof}

Fix $\eta \in \cL_c(U)$.  The intersection multiplicity $\eta \cap \zeta$ is defined by choosing a suitably generic smooth function $u:U \to \bR$ and adding up local multiplicity of the intersection $\eta \cap \Gamma_{du}$.  As $\eta$ has compact support, we may assume that $u$ vanishes outside of a compact set.  By extending this function by zero further along an open inclusion $U \hookrightarrow V$, we see that the triangle
$$\xymatrix{
\cL_c(U) \ar[rr] \ar[dr]_{\cap \zeta} & & \cL_c(V) \ar[dl]^{\cap \zeta} \\
& \bQ
}
$$
commutes, i.e. we have a map from $\cL_c$ to the constant pre-cosheaf.  This induces a map $\cL_c \to \bD$ that we continue to denote by $\cap \zeta$.

\subsection{The local index formula}

The maps $\CC:\Fun_c(M) \to \cL_c(M)$ clearly assemble to a morphism of cosheaves, we therefore have the ``local'' or ``cosheaf'' version of the index formula

\begin{theorem}
\label{thm:localbdk}
The triangle
$$\xymatrix{
\Fun_c \ar[rr]^{\CC} \ar[dr]_{\int} &  &  \cL_c \ar[dl]^{\cap \zeta} \\ 
& \bD
}
$$
of cosheaves on $\cC$ commutes.
\end{theorem}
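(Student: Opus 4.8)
The plan is to exploit the universal property of the cosheafification $\bD$ rather than to attempt to check commutativity chart by chart. The key observation is that both composites $\int$ and $(\cap\zeta)\circ\CC$, viewed as morphisms of cosheaves $\Fun_c \to \bD$, were produced by \emph{factoring} a morphism of pre-cosheaves through the universal map $\bD \to P$, where $P$ is the constant pre-cosheaf $P(U) = \bQ$. Because we have no concrete hold on the sections $\bD(U)$, we cannot simply compare the two maps on sections over each chart $U$; the comparison must instead be carried out one level down, in the pre-cosheaf $P$, whose sections are honest copies of $\bQ$.

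First I would recall the two factorizations. By construction $\int:\Fun_c \to \bD$ is the unique lift along $\bD \to P$ of the pre-cosheaf morphism $\Fun_c \to P$ whose component over $U$ is $f \mapsto \int_U f$; this lift exists and is unique because $\Fun_c$ is a cosheaf by Proposition \ref{prop:funccosheaf} and $\bD \to P$ is universal among maps from cosheaves to $P$. Likewise $\cap\zeta:\cL_c \to \bD$ is the unique lift of the pre-cosheaf morphism $\cL_c \to P$ given over $U$ by $\eta \mapsto \eta \cap \zeta_U$.

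Next I would compute the effect of $\bD \to P$ on the composite $(\cap\zeta)\circ\CC$. Since $\CC:\Fun_c \to \cL_c$ is a morphism of cosheaves, and $\cap\zeta:\cL_c \to \bD$ becomes $\eta \mapsto \eta\cap\zeta_U$ after composing with $\bD \to P$, the four-fold composite $\Fun_c \xrightarrow{\CC} \cL_c \xrightarrow{\cap\zeta} \bD \to P$ is the pre-cosheaf morphism with component $f \mapsto \CC(f)\cap\zeta_U$ over each $U$. By the index formula on the manifold $U$ (the Index formula theorem above, due to Kashiwara), this equals $f \mapsto \int_U f$ for every chart $U$ and every $f\in\Fun_c(U)$. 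Thus $(\cap\zeta)\circ\CC$ is a second lift along $\bD \to P$ of the \emph{same} pre-cosheaf morphism $f \mapsto \int_U f$ that $\int$ lifts.

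Finally I would invoke uniqueness. Both $\int$ and $(\cap\zeta)\circ\CC$ are morphisms of cosheaves $\Fun_c \to \bD$ lifting one and the same pre-cosheaf morphism $\Fun_c \to P$, so by the uniqueness clause of the universal property of cosheafification they must coincide, which is precisely the assertion that the triangle commutes. I do not expect a genuine obstacle: the content of the theorem has been pushed entirely into the manifold index formula together with the two factorization statements established earlier, so the remaining step is formal. The one point demanding care---and the reason the naive chart-by-chart check is unavailable---is exactly that the maps into $\bD$ are defined only abstractly through the universal property, so the comparison has to be made after descending to the constant pre-cosheaf $P$.
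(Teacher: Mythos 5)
Your proof is correct and is essentially the paper's own argument: the paper states Theorem \ref{thm:localbdk} without further proof precisely because, as you observe, both $\int$ and $(\cap\zeta)\circ\CC$ are lifts along $\bD \to P$ of the same pre-cosheaf morphism $f \mapsto \int_U f$ (they agree in $P$ by Kashiwara's index formula on each chart $U$), so the uniqueness clause of the universal property of cosheafification forces them to coincide. Your write-up just makes explicit the formal step the paper leaves implicit.
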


\section{Application to orbifolds}

In this section $\cC$ continues to denote the subanalytic Grothendieck site of oriented $n$-manifolds.  If $\cC'$ is any other Grothendieck site equipped with a map to $\cC$, the cosheaves $\bD$, $\Fun_c$, and $\cL_c$ pull back to $\cC'$ and the pullback triangle of Theorem \ref{thm:localbdk} still commutes.  In this section we will study this triangle when $\cC'$ is the Grothendieck topology associated to a real analytic orbifold.

\subsection{The subanalytic site of an orbifold}

There are many possible foundations for a theory of orbifolds, we will use those set up in \cite{BGNX}.  Let $X = [X_1 \rightrightarrows X_0]$ be an orbifold in the sense of loc. cit. Section 1.7.  We let $\overline{X}$ denote the coarse space of $X$ in the sense of loc. cit. Section 1.1.4.  For every topological space $Y$ we have a notion of a map from $Y$ to $X$ (in fact there is a groupoid of such maps), there is a good notion of fiber product of maps over $X$, and it is possible to single out a class of \'etale maps to $X$.

By a oriented, real analytic structure on $X$, we will mean a finite collection of \'etale maps $U_1 \to X,\ldots,U_m \to X$ that cover $\overline{X}$ such that:
\begin{itemize}
\item Each $U_i$ is an $n$-dimensional oriented real analytic manifold
\item For each $i$ and $j$, the map $U_i \times_X U_j \to U_i$ is real analytic and orientation-preserving.
\end{itemize}
If $U$ is another oriented real analytic manifold, we say that a map $U \to X$ is real analytic if the induced map $U \times_X U_i \to U_i$ is real analytic and orientation preserving for $i =1,\ldots, m$.

If $X$ is an oriented real analytic orbifold, define a category $\cC_{/X}$ in the following way:
\begin{enumerate}
\item The objects are real analytic, orientation-preserving \'etale maps $U \to X$
\item The morphisms are orientation preserving open immersions (sic) $V \to U$ over $X$.
\end{enumerate}

\begin{remark}
In (2), we are taking advantage of the fact that the subanalytic topology is fine enough that the ``usual'' topology and ``\'etale'' topologies coincide.
\end{remark}

A sieve $\{U_i \to U\}$ in $\cC_{/X}$ is a covering sieve if and only if it is a covering sieve in $\cC$.  There is thus evidently a continuous functor $\mathfrak{j}_X:\cC_{/X} \to \cC$.  In other words, if $F:\cC \to \Vect$ is a cosheaf on $\cC$, then $F \circ \fj_X$ is a cosheaf on $\cC_{/X}$.  

\begin{remark}
Note that $X$ itself is not always an object of $\cC_{/X}$, however we abuse notation and write $F(X)$ for the ``global sections'' of $F$ over $\cC_{/X}$, i.e. 
$$F(X) = \varinjlim_{(U \to X) \in \cC_{/X}} F(U)$$
Moreover, even if $X$ is not an object of $\cC_{/X}$, we still have a notion of a covering sieve on $X$.  It is a set $S$  of objects of $\cC_{/X}$ with the following properties:
\begin{enumerate}
\item (Sieve property) if $(U \to X) \in S$ and $V$ admits a map to $U$, then $V$ is in $S$ as well
\item (Covering property) there is a finite collection of maps $U_1 \to X, \cdots, U_n \to X$ in $S$ such that the induced maps $U_i \to \overline{X}$ cover $\overline{X}$.
\end{enumerate}
If $S$ is a covering sieve and $F$ is a cosheaf on $\cC_{/X}$ then the natural map to $F(X)$ is an isomorphism:
$$\varinjlim_{(U \to X) \in S} F(U) \stackrel{\sim}{\to} F(X)$$

\end{remark}

\subsection{The orbifold index formula, abstract version}

We make the following definition:

\begin{definition}
\label{def:funcorb}
Let $X$ be an orbifold, let $\cC_{/X}$ be its subanalytic \'etale site, and let $\fj_X$ be the natural forgetful functor $\cC_X \to \cC$.
\begin{enumerate}
\item 
A \emph{constructible function} on $X$ is a section of the cosheaf $\Fun_c \circ \fj_X$ on $\cC_{/X}$.  Write $\Fun_c(X)$ for the group of constructible functions on $X$.
\item 
A \emph{conic Lagrangian cycle} on $T^* X$ is a section of the cosheaf $\cL_c \circ \fj_X$ on $\cC_{/X}$.  Write $\cL_c(X)$ for the group of conic Lagrangian cycles on $T^*X$.
\end{enumerate}
\end{definition}

\begin{example}
\label{ex:quotient}
Let $M$ be a real analytic manifold equipped with a subanalytic action of a finite group $G$, and let $X$ be the quotient stack $M/G$.  By definition $(M \to X)$ is an object of $\cC_{/X}$ whose automorphism group is $G$.  Moreover, every object of $\cC_{/X}$ admits a map to $(M \to X)$.  It follows that $\Fun_c(X) := \varinjlim_{\cC_{/X}} \Fun_c \circ \fj_X$ is naturally identified with the smaller limit $\varinjlim_G \Fun_c(M)$, where the limit is over the one-object diagram with automorphism group $G$.  In other words, $\Fun_c(X)$ is naturally identified with the coinvariants of the $G$-module $\Fun_c(M)$.  An identical argument identifies $\cL_c(X)$ with the $G$-coinvariants of $\cL_c(M)$.
\end{example}

Let us also discuss the cosheaf $\bD$.  For simplicity let us assume $X$ is connected.  We define the constant cosheaf $\bD_X$ on the subanalytic site $\cC_X$ of $X$ by $\bD_X = \bD \circ \fj_X$.  The global sections of $\bD_X$ on $\cC_X$ is naturally given by $H_0(X;\bQ) = \bQ$.  The cosheaf-level maps $\int:\Fun_c \to \bD$ and $\cap \zeta:\cL_c \to \bD$ induce maps $\int:\Fun_c(X) \to \bD_X(X) = \bQ$ and $\cap \zeta:\cL_c(X) \to \bD(X) = \bQ$.  Similarly the cosheaf map $\CC:\Fun_c \to \cL_c$ induces a map $\Fun_c(X) \to \cL_c(X)$.  The following is an immediate consequence of Theorem \ref{thm:localbdk}:

\begin{theorem}
\label{thm:bdkorb}
Let $X$ be a real analytic orbifold.  We have a commutative diagram
$$\xymatrix{
\Fun_c(X) \ar[rr]^{\CC} \ar[dr]_{\int} &  &  \cL_c(X) \ar[dl]^{\cap \zeta} \\ 
& \bQ
}
$$
\end{theorem}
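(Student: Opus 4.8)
The plan is to obtain the orbifold triangle by pulling the cosheaf-level index triangle of Theorem \ref{thm:localbdk} back along $\fj_X$ and then passing to global sections; commutativity should survive both operations purely formally. First I would use that $\fj_X \colon \cC_{/X} \to \cC$ is a continuous functor of sites, so that precomposition with $\fj_X$ carries cosheaves on $\cC$ to cosheaves on $\cC_{/X}$ and morphisms of cosheaves to morphisms of cosheaves. The three objects $\Fun_c$, $\cL_c$, $\bD$ and the three maps $\CC$, $\int$, $\cap\zeta$ therefore pull back to a triangle of cosheaves $\Fun_c\circ\fj_X \to \cL_c\circ\fj_X \to \bD\circ\fj_X$ on $\cC_{/X}$. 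Its commutativity is inherited objectwise: a commuting triangle of cosheaves is just an equality of natural transformations, hence an equality at every object, and for each $(U \to X)$ in $\cC_{/X}$ the pulled-back triangle evaluated there is exactly the manifold triangle of Theorem \ref{thm:localbdk} for the manifold $\fj_X(U)$, which commutes.

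Next I would take global sections over $\cC_{/X}$, that is, the colimit $\varinjlim_{(U\to X)\in\cC_{/X}}$. Since colimits are functorial, a morphism of pre-cosheaves induces a map on global sections and a commuting triangle of cosheaves induces a commuting triangle on global sections. Applying this to the pulled-back triangle and inserting the identifications of Definition \ref{def:funcorb}, namely $\Fun_c(X) = \varinjlim \Fun_c(\fj_X(U))$ and $\cL_c(X) = \varinjlim \cL_c(\fj_X(U))$, together with the computation $\bD_X(X) = H_0(X;\bQ) = \bQ$ for $X$ connected, I recover precisely the triangle in the statement, with its commutativity.

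The only nonformal ingredients are all in place beforehand: continuity of $\fj_X$, commutativity of the manifold-level triangle (Theorem \ref{thm:localbdk}), and the evaluation $\bD_X(X) = \bQ$. The point I would be most careful about is that the orbifold maps $\int$ and $\cap\zeta$ really are the colimits of the manifold maps and genuinely land in $\bD_X(X) = \bQ$, so that the scalar-valued triangle even makes sense; this is exactly what lets the diagram close up. I expect no genuine obstacle, only bookkeeping: the sole substantive step, that forming the colimit commutes with the diagram, is formal. As a sanity check I would specialize to a global quotient $X = M/G$ as in Example \ref{ex:quotient}, where global sections compute the $G$-coinvariants of $\Fun_c(M)$ and $\cL_c(M)$; the triangle is then the $G$-coinvariants of the $G$-equivariant manifold triangle for $M$, which visibly commutes.
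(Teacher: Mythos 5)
Your proposal is correct and follows exactly the paper's route: the paper also obtains Theorem \ref{thm:bdkorb} by pulling the cosheaf triangle of Theorem \ref{thm:localbdk} back along $\fj_X$ and passing to global sections over $\cC_{/X}$, with the identification $\bD_X(X) = H_0(X;\bQ) = \bQ$, declaring the result an immediate consequence. Your write-up merely makes explicit the formal bookkeeping (objectwise commutativity, functoriality of colimits) that the paper leaves implicit.
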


We will give a more concrete description of the groups $\Fun_c(X)$ and $\cL_c(X)$, and of the maps in the triangle of Theorem \ref{thm:bdkorb}, in Section \ref{sec:coarserole}.

\subsection{The role of the coarse space}
\label{sec:coarserole}

Throughout this section, let $X$ denote a real analytic orbifold.  We also consider the orbifold cotangent bundle $T^*X \to X$.  Let $\overline{X}$ denote the coarse space of $X$ and $\overline{T^* X}$ denote the coarse space of $T^* X$.  Let $p:X \to \overline{X}$ denote the natural projection.

\begin{remark}
$\overline{X}$ can be realized naturally as a subanalytic variety, i.e. as a closed subanalytic set in a real analytic manifold, such that the map $p:X \to \overline{X}$ is subanalytic. To see this recall that there exist subanalytic bump functions of class $C^r$ for any finite $r$ (e.g. \cite[Section 3.2]{NZ}).  It follows by a partition of unity argument that we can separate points of $\overline{X}$ by finitely many subanalytic functions $u:X \to \bR$ of class $C^r$, \emph{i.e.} there is a subanalytic map $X \to \bR^n$ of class $C^r$ that induces an injection $\overline{X} \to \bR^n$.  The image of this map is a subanalytic variety.  This remark applies also to $\overline{T^* X}$.
\end{remark}

\begin{remark}
\label{rem:maketwo}
Locally on $\overline{X}$, $X$ looks like a quotient stack.  More precisely, with the subanalytic structure of (1), $\overline{X}$ admits a cover by subanalytic charts 
$\overline{U}$, so that the fiber product stack $\overline{U} \times_{\overline{X}} X$ is isomorphic to the quotient of a real analytic manifold by a finite group acting subanalytically.
\end{remark}

\subsubsection{Constructible functions on the coarse space}

If $\phi:U \to X$ is an object of $\cC_{/X}$, and $f \in \Fun_c(U)$, write $(p\circ \phi)_!(f)$ for the function
$$(p \circ \phi)_!(f)(x) =  \sum_{u \in (p \circ \phi)^{-1}(x)} f(u)$$
The system of maps $(p \circ \phi)_!:\Fun_c(U) \to \Fun_c(\overline{X})$ assemble to a map from the direct limit $\Fun_c(X) \to \Fun_c(\overline{X})$.  Let us denote this map by $p_!$.

\begin{theorem}
\label{thm:46}
Let $X$ be a real analytic orbifold, let $\overline{X}$ be its coarse space, and let $p$ denote the quotient map $p:X \to \overline{X}$.  The map $p_!:\Fun_c(X) \to \Fun_c(\overline{X})$ defined above is an isomorphism.
\end{theorem}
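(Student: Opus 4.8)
The plan is to reduce to the case of a global quotient stack $M/G$ by a local-to-global argument over the coarse space, and then to compute $p_!$ explicitly there. First I would observe that both the source and the target of $p_!$ are restrictions of cosheaves over the subanalytic site of $\overline{X}$. For a subanalytic open $\overline{V} \subseteq \overline{X}$, write $X_{\overline{V}} = \overline{V} \times_{\overline{X}} X$ for the restricted orbifold; then $\overline{V} \mapsto \Fun_c(X_{\overline{V}})$ is a cosheaf (its sections compute the global sections of $\Fun_c \circ \fj_X$ over the sieve lying above $\overline{V}$), while $\overline{V} \mapsto \Fun_c(\overline{V})$ is a cosheaf by the same extension-by-zero Mayer--Vietoris argument as in Proposition \ref{prop:funccosheaf}, whose proof never used smoothness of the ambient space. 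The maps $(p\circ\phi)_!$ are compatible with restriction to smaller opens, so $p_!$ is a morphism of cosheaves over $\overline{X}$, and a cosheaf morphism that is an isomorphism on every member of a covering sieve is an isomorphism on global sections (both sides compute global sections as the colimit over the sieve, and $p_!$ is then a natural isomorphism of the two diagrams). By Remark \ref{rem:maketwo}, $\overline{X}$ admits a finite subanalytic cover by opens $\overline{U}$ over which $X_{\overline{U}} \cong M/G$ is a global quotient, and every member of the sieve these generate is again such a quotient (restrict to a $G$-invariant open of $M$). Hence it suffices to treat the case $X = M/G$.

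In the quotient case $\overline{X} = M/G$ is the ordinary topological quotient, realized as a subanalytic variety with $p:M \to M/G$ finite and subanalytic, and by Example \ref{ex:quotient} we have $\Fun_c(X) = \Fun_c(M)_G$, the $G$-coinvariants. I would choose a $G$-invariant subanalytic Whitney stratification $S$ of $M$ and refine it to be compatible with all fixed loci $M^g$, so that the stabilizer $\mathrm{Stab}_G(x)$ is constant along each stratum. Then $G$ permutes the strata, $\Fun_c(M,S)$ is the associated permutation $\bQ[G]$-module on the basis $\{1_N\}$, and its coinvariants are free with one generator $[1_N]$ per $G$-orbit of strata. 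On the other side, the images $\{p(N)\}$ form a locally closed subanalytic partition of $\overline{X}$ whose pieces correspond bijectively to $G$-orbits of strata, so the functions $1_{p(N)}$ form a basis of the corresponding space of constructible functions on $\overline{X}$. A short bookkeeping computation then gives $p_!(1_N) = c_N\, 1_{p(N)}$, where for $[x]\in p(N)$ the coefficient $c_N = \#(Gx \cap N) = [\mathrm{Stab}_G(N) : \mathrm{Stab}_G(x)]$ is a positive integer (the pointwise stabilizer is contained in the setwise stabilizer $\mathrm{Stab}_G(N)$). Thus in these bases $p_!$ is diagonal with nonzero entries, hence an isomorphism at the level of $S$; passing to the colimit over $G$-invariant stratifications, which are cofinal and match stratifications of $\overline{X}$ under $p^*$ and $p_*$, yields $\Fun_c(M)_G \xrightarrow{\sim} \Fun_c(\overline{X})$.

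I expect the genuine obstacle to lie in this local computation rather than in the formal gluing. The points requiring care are that passing to the possibly singular coarse space is harmless---that each $p(N)$ really is locally closed subanalytic and that the $1_{p(N)}$ constitute a basis---and the stabilizer count establishing that the diagonal coefficients $c_N$ never vanish; the latter is what makes $p_!$ an isomorphism rather than merely injective or surjective. The cosheaf reduction of the first step is by contrast essentially formal, once one records that Proposition \ref{prop:funccosheaf} applies verbatim to subanalytic spaces that are not manifolds.
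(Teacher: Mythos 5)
Your proof is correct, and its first half --- packaging $p_!$ as a morphism of cosheaves over the coarse space and invoking Remark \ref{rem:maketwo} to reduce to a global quotient $X = M/G$ --- is the same reduction the paper performs. Where you genuinely diverge is the quotient case. The paper argues module-theoretically: pullback along $M \to \overline{X}$ identifies $\Fun_c(\overline{X})$ with the invariants $\Fun_c(M)^G$, Example \ref{ex:quotient} identifies $\Fun_c(X)$ with the coinvariants $\Fun_c(M)_G$, and the resulting map $\Fun_c(M)_G \to \Fun_c(M)^G$ is identified with the norm map, which is an isomorphism for any rational vector space with $G$-action. You instead choose a $G$-invariant stratification adapted to the fixed loci and compute $p_!$ on the basis of stratum indicators, obtaining a diagonal map with entries $c_N = [\mathrm{Stab}_G(N):G_x] \geq 1$. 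The paper's route is shorter and isolates exactly where rational coefficients enter (the norm $V_G \to V^G$ is invertible only after inverting $|G|$); your route is more explicit about which denominators occur, and it is in fact the more literally accurate computation: since $p_!$ by definition sums over the set-theoretic fiber, i.e.\ over the orbit, the pullback of $p_!(h)$ to $M$ is $x \mapsto \sum_{u \in Gx} h(u)$, which differs from the norm $x \mapsto \sum_{g \in G} h(g^{-1}x)$ by the factor $|G_x|$ at points with nontrivial stabilizer. (This glitch in the paper's proof is harmless, because multiplication by the nowhere-vanishing $G$-invariant constructible function $x \mapsto |G_x|$ is an automorphism of $\Fun_c(M)^G$, but your orbit count $c_N = \#(Gx \cap N)$ sidesteps it.) Two minor points to tighten in your write-up: $\Fun_c(M,S)$ is free on the indicators of strata with \emph{compact closure}, not of all strata, though $G$ preserves that subset so your permutation-module argument is unaffected; and the cofinality claim at the end --- that the partitions $\{p(N)\}$ induced from $G$-invariant stratifications exhaust $\Fun_c(\overline{X})$ --- deserves a sentence, since it is the same point the paper needs when it asserts $\Fun_c(\overline{X}) \cong \Fun_c(M)^G$ via pullback.
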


\begin{proof}
By Remark \ref{rem:maketwo}, we may find a covering sieve $\{\overline{U}_i \to \overline{X}\}$ of $X$ such that each $U_i := \overline{U}_i \times_{\overline{X}} X$ is a quotient stack.  Write $p_i:U_i \to \overline{U}_i$.   By the cosheaf property $\Fun_c(X) \cong \varinjlim \Fun_c(U_i)$ and $\Fun_c(\overline{X}) \cong \varinjlim \Fun_c(\overline{U}_i)$, it suffices to prove that the maps $p_{i!}:\Fun_c(U_i) \to \Fun_c(\overline{U}_i)$ are isomorphisms.  In other words, we may reduce to the case when $X$ is a quotient stack.

Thus suppose $X = M/G$ and $\overline{X} = M /\!/G$.  By Example \ref{ex:quotient}, we may identify $\Fun_c(X)$ with the coinvariants $\Fun_c(M)_G$.  On the other hand, pulling back a constructible function on $\overline{X}$ to a constructible function on $M$ identifies $\Fun_c(\overline{X})$ with the $G$-invariants of $\Fun_c(M)$.  

Write $q$ for the map $M \to X$.  For each $x \in \overline{X}$, the fiber $(p \circ q)^{-1}(x) \subset M$ is a $G$-orbit.  Thus, the composite $(p \circ q)_!:\Fun_c(M) \to \Fun_c(M)_G \to \Fun_c(\overline{X})$ takes $h$ to $\sum_{g \in G} h(g^{-1}-)$.  In other words, the map $\Fun_c(M)_G \to \Fun_c(\overline{X}) \cong \Fun_c(M)^G$ is the norm map.  The norm map $V_G \to V^G$ is an isomorphism for every rational vector space $V$ with a $G$-action, so the Theorem follows.
\end{proof}

\subsubsection{Integration and the coarse space}

For simplicity let us assume $X$ is connected.  We define the constant cosheaf $\bD_X$ on the subanalytic site $\cC_X$ of $X$ by $\bD_X = \bD \circ \fj_X$.  The global sections of $\bD_X$ on $\cC_X$ is naturally given by $H_0(X;\bQ) = \bQ$.  The integration map $\int:\Fun_c \to \bD$ between cosheaves on $\cC$ induces a map $\Fun_c(X) \to \bD_X(X) = \bQ$, which we also denote by $\int$.  

\begin{example}
Let $M$, $G$, and $X$ be as in Example \ref{ex:quotient}.  If $h \in \Fun_c(M)$ is of the form $h_1 - g^* h_1$, then $\int_M h = 0$.  It follows that $\int$ descends to an operator on coinvariants $\Fun_c(M)_G \to \bQ$.  If $f \in \Fun_c(X) \cong \Fun_c(M)_G$, and $h$ is any lift of $f$ to $\Fun_c(M)$, then we have $\int_X f = \int_M h$.
\end{example}

We wish to describe this operator in terms of the identification $\Fun_c(X) = \Fun_c(\overline{X})$ of Theorem \ref{thm:46}.

Let us first define a constructible function $\iota:\overline{X} \to \bQ$ by
$$\iota(x) = 1/|G_x| \quad \text{ where $G_x$ is the inertia group of $X$ at $x$}$$

\begin{proposition}
Let $p_!:\Fun_c(X) \to \Fun_c(\overline{X})$ be the identification of Theorem \ref{thm:46}.  Let $f \in \Fun_c(X)$.  Then $\int f = \int p_!(f) \cdot \iota$
\end{proposition}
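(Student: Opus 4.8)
The plan is to reduce the statement to the quotient-stack case, which is the only situation where all terms become completely explicit. By the cosheaf property and Remark~\ref{rem:maketwo}, we may cover $\overline{X}$ by charts $\overline{U}_i$ for which $U_i := \overline{U}_i \times_{\overline{X}} X$ is a quotient stack $M_i/G_i$. Because $\int$ and $p_!$ are both assembled from compatible maps on the members of a covering sieve, and because the identity $\int f = \int p_!(f)\cdot\iota$ is a linear equation in $f$ that can be checked after restricting to a covering, it suffices to verify the formula when $X = M/G$ and $\overline{X} = M/\!\!/ G$. Here I must be a little careful that the local inertia function $\iota$ restricts correctly, but since $G_x$ is computed from the stabilizer of any lift of $x$, the restriction of $\iota$ to $\overline{U}_i$ is exactly the inertia function of $U_i$, so the reduction is clean.

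In the quotient case I would unwind all three terms via Example~\ref{ex:quotient} and the preceding Example. Fix $f \in \Fun_c(X) \cong \Fun_c(M)_G$, and let $h \in \Fun_c(M)$ be a lift; then $\int_X f = \int_M h$ by the preceding Example. On the other side, Theorem~\ref{thm:46} identifies $\Fun_c(\overline{X})$ with $\Fun_c(M)^G$ via the norm map, so $p_!(f)$ is the function $\overline{x} \mapsto \sum_{g\in G} h(g^{-1}m)$ for any lift $m$ of $\overline{x}$. The claim thus becomes the concrete identity
\[
\int_M h = \int_{\overline{X}} \Bigl(\sum_{g\in G} h(g^{-1}-)\Bigr)\cdot \iota,
\]
where $\iota(\overline{x}) = 1/|G_{\overline{x}}|$ and $G_{\overline{x}}$ is the stabilizer in $G$ of a point $m$ over $\overline{x}$.

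To prove this identity I would rewrite the right-hand integral as a sum over strata of $\overline{X}$ and use the orbit-counting (orbit-stabilizer) relation. Since $M \to \overline{X}$ identifies $\overline{X}$ with $M/\!\!/ G$, a point $\overline{x}$ with stabilizer of order $|G_{\overline{x}}|$ has exactly $|G|/|G_{\overline{x}}|$ preimages in $M$, all in one $G$-orbit. For each such $\overline{x}$ the summand $\bigl(\sum_{g} h(g^{-1}m)\bigr)$ equals $|G_{\overline{x}}|$ times the sum of $h$ over the fiber $(p\circ q)^{-1}(\overline{x})$, because each value $h(m')$ on the orbit is hit exactly $|G_{\overline{x}}|$ times as $g$ ranges over $G$. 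Multiplying by $\iota(\overline{x}) = 1/|G_{\overline{x}}|$ cancels this factor, leaving precisely the sum of $h$ over the fiber. Integrating over $\overline{X}$ then recovers $\int_M h$, since $\int$ on $\overline{X}$ is Euler integration against the pushforward of $h$ along the finite map $p\circ q$, and Euler characteristic is multiplicative/additive for such finite covers.

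The main obstacle I expect is the bookkeeping of multiplicities in this last step: the function $\iota$ is constructible and constant on the inertia strata, so one must organize the computation stratum-by-stratum and confirm that $\chi_c$ of a stratum downstairs matches $\chi_c$ of its preimage upstairs divided by the (locally constant) orbit size. This requires knowing that $p\circ q$ is, over each inertia stratum of $\overline{X}$, a $|G|/|G_{\overline{x}}|$-sheeted covering, so that $\chi_c$ of the preimage is that integer times $\chi_c$ of the base; combined with the cancellation above this yields the weighted Euler integral exactly. Once the multiplicity is pinned down the rest is a routine rearrangement of finite sums.
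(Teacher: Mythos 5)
Your proof is correct, and its first half---the reduction via Remark~\ref{rem:maketwo} and the cosheaf property to the case of a quotient stack $X = M/G$---is exactly the paper's. The second half takes a genuinely different route. The paper argues function-by-function on a spanning set: it calls $h = 1_K \in \Fun_c(M)$ ``$C$-simple'' when all isotropy groups of points of $K$ lie in a single conjugacy class $C$ of subgroups, observes that simple classes span $\Fun_c(M)_G$, and for simple $f$ computes both sides in closed form: $\int_X f = \chi_c(K)$, while $p_!(f) = m\,1_{p(K)}$ with $m$ the order of a group in $C$, so the factor $\iota = 1/m$ cancels. You instead treat an arbitrary representative $h$ in one stroke: the orbit--stabilizer identity $\sum_{g} h(g^{-1}m) = |G_{\overline{x}}| \sum_{m' \in (p\circ q)^{-1}(\overline{x})} h(m')$ shows that $p_!(f)\cdot\iota$ is the set-theoretic fiber-sum pushforward of $h$, and your Fubini-type argument over the orbit-type strata (where $p \circ q$ is a $|G|/|G_{\overline{x}}|$-sheeted cover and $\chi_c$ multiplies by the covering degree) identifies its integral with $\int_M h$. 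Both routes ultimately rest on the same two ingredients---orbit--stabilizer counting and the behavior of $\chi_c$ over orbit-type strata---but your organization buys some robustness: the paper's closed-form claims, namely $p_!(1_K) = m\,1_{p(K)}$ and, implicitly, $\chi_c(K) = \chi_c(p(K))$, hold only when $K$ meets each $G$-orbit at most once (if $K$ is a single free orbit, then $p_!(1_K) = |G|\,1_{p(K)}$ and $\chi_c(K) = |G|\,\chi_c(p(K))$), a restriction the paper never imposes on its simple functions and which must be arranged when decomposing; your covering-degree bookkeeping needs no such hypothesis. One further point in your favor: the Proposition is true only if $p_!$ is read as the norm map from the proof of Theorem~\ref{thm:46}, not as the naive set-theoretic fiber sum of the paper's displayed definition of $(p\circ\phi)_!$---the two differ by the locally constant factor $|G_{\overline{x}}|$, and with the naive reading the identity already fails for $1_{\{0\}}$ on $\bR/\{\pm 1\}$---so your choice of convention is the one under which the statement, and the paper's own proof, make sense.
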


\begin{proof}
As in the proof of Theorem \ref{thm:46} we can reduce to the case when $X$ is a quotient stack, say $X = M/G$.  For each conjugacy class $C$ of subgroups of $G$, call a function $h \in \Fun_c(M)$ ``$C$-simple'' if it is the indicator function of a locally closed and relatively compact set $K$ whose isotropy groups are all in $C$.  Call an element of $\Fun_c(M)$ ``simple'' if it is $C$-simple for some $C$.  And call an element of $\Fun_c(M)_G$ ``simple'' if it is represented by a simple function in $\Fun_c(M)$.  We can write any $f \in \Fun_c(M)_G$ as a linear combination of simple functions, so it suffices to verify the Proposition in case $f$ itself is simple.  

If $f \in \Fun_c(M)_G$ is represented by the $C$-simple indicator function of a set $K \subset M$, then $\int_X f = \chi_c(K)$.  On the other hand if $m$ is the order of a group in the conjugacy class $C$ then $p_!(f)(x)$ is given by
$$
p_!(f)(x) = \bigg\{
\begin{array}{ll}
m & \text{if $x \in p(K)$} \\
0 & \text{otherwise}
\end{array}
$$
The Proposition follows.
\end{proof}

\subsubsection{Lagrangian cycles in $\overline{T^* X}$}

We can analyze $\cL_c(X)$ in terms of the coarse space $\overline{T^* X}$ of the orbifold $T^* X$.  

\begin{remark}
If $X_1$ and $X_2$ are two orbifolds with $\overline{X_1} \cong \overline{X_2}$, we may still have $\overline{T^*X}_1 \ncong \overline{T^* X}_2$.
\end{remark}

Let $U \to X$ be a map that defines an object of $\cC_{/X}$.  By definition, $U \to X$ is \'etale, and therefore induces a map $T^* U \to \overline{T^*X}$.  Let us say that a subset of $\overline{T^* X}$ is a \emph{coarse Whitney Lagrangian} if its inverse image under all such maps $T^* U \to \overline{T^* X}$ is of the form $\Lambda_S$ in the sense of Section \ref{subsec:lcai}.  If $\Lambda_1$ and $\Lambda_2$ are coarse Whitney Lagrangians with $\Lambda_1 \subset \Lambda_2$, then we have an inclusion $H_n^\BM(\Lambda_1) \to H_n^\BM(\Lambda_2)$, and we will set 
$$W(X) = \bigcup H_n^\BM(\Lambda)$$
where the union runs over coarse Whitney Lagrangians in $\overline{T^* X}$.   Let $W_c(X) \subset W(X)$ denote the subgroup of cycles whose support projects onto a compact set in $\overline{X}$.

\begin{example}
Let $X$ be the quotient orbifold $\bC/\{\pm 1\}$.  Then $\overline{T^*X}$ can be identified with the singular space $\{(u,v,w) \mid uv = w^2\} \subset \bC^3$, where $u$ is the square of the base coordinate and $v$ is the square of the fiber coordinate on $T^* \bC$.  The subset cut out by $u = 0$ belongs to $W_c(X)$.
\end{example}

Let us show that we can identify $\cL_c(X)$ with $W_c(X)$.  For each object $U \to X$ of $\cC_{/X}$, if $S$ is a stratification of $U$ then we may find a Whitney Lagrangian $\Lambda \subset \overline{T^* X}$ such that the map $T^* U \to \overline{T^* X}$ carries $\Lambda_S$ to a subset of $\Lambda$.  For instance, we may take $\Lambda = \Lambda_{S'}$ where $S'$ is a stratification of $X$ whose pullback to $U$ refines $S$.  If $\eta \in H_n^\BM(\Lambda_S)$ has compact horizontal support then its image gives a well-defined element in $H_n^\BM(\Lambda)$, again with compact horizontal support.  This gives us a map $\cL_c(U) \to W_c(X)$, and these assemble to a map from the direct limit $w_X:\cL_c(X) \to W_c(X)$.

\begin{theorem}
\label{thm:LW}
For any real analytic orbifold $X$, the map $w_X:\cL_c(X) \to W_c(X)$ is an isomorphism.
\end{theorem}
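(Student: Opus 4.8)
The plan is to imitate, one level up, the proof of Theorem \ref{thm:46}: first reduce to the case of a quotient stack by a cover of the coarse space, and then compute both sides using the interaction between rational Borel--Moore homology and finite group actions. Throughout, the role played by the norm isomorphism $V_G \cong V^G$ in Theorem \ref{thm:46} will be played here by the corresponding statement for Borel--Moore homology.

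For the reduction, I would use Remark \ref{rem:maketwo} to choose a cover $\{\overline{U}_i \to \overline{X}\}$ of the coarse space such that each $U_i := \overline{U}_i \times_{\overline{X}} X$ is a quotient stack; this is a covering sieve of $X$. Since $\cL_c$ is a cosheaf on $\cC_{/X}$, the source is computed as $\cL_c(X) = \varinjlim_i \cL_c(U_i)$. For the target I would argue that $W_c$ satisfies the analogous gluing: the charts $\overline{U}_i$ pull back to an open cover $\{\overline{T^* U_i}\}$ of $\overline{T^* X}$, where $\overline{T^* U_i}$ is the open piece lying over $\overline{U}_i$, and $W_c$ is assembled from Borel--Moore homology \emph{with compact horizontal support}, which is covariant for extension by zero along open inclusions and obeys a Mayer--Vietoris sequence for open covers. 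Granting that $W_c(X) = \varinjlim_i W_c(U_i)$ and that $w_X$ is compatible with these descriptions (by naturality of the ``image in $H_n^\BM(\Lambda)$'' construction under open restriction on $\overline{T^* X}$), it suffices to prove that each $w_{U_i}$ is an isomorphism, reducing us to $X = M/G$.

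In the quotient-stack case, Example \ref{ex:quotient} identifies $\cL_c(X)$ with the coinvariants $\cL_c(M)_G$. The coarse space is the finite quotient $\overline{T^* X} = T^* M /\!/ G$, and a coarse Whitney Lagrangian $\Lambda$ pulls back along the finite quotient map $\pi : T^* M \to \overline{T^* X}$ to a $G$-invariant characteristic variety $\Lambda_S = \pi^{-1}(\Lambda)$ with $\Lambda = \Lambda_S / G$. Unwinding the definition of $w_X$, the ``image'' map on $H_n^\BM$ is exactly the proper pushforward $\pi_*$ along the finite map $\Lambda_S \to \Lambda$, so the claim becomes that $\pi_*$ induces an isomorphism $\big(H_n^\BM(\Lambda_S)\big)_{c,G} \to H_n^\BM(\Lambda_S/G)_c$. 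This is the standard fact that, with rational coefficients, the pushforward along a finite quotient $\pi : Y \to Y/G$ induces an isomorphism $H_*^\BM(Y;\bQ)_G \xrightarrow{\sim} H_*^\BM(Y/G;\bQ)$ (equivalently, the transfer $\pi^*$ identifies $H_*^\BM(Y/G)$ with the $G$-invariants, and invariants agree with coinvariants rationally via the norm $\pi^*\pi_* = \sum_g g$). Applying this to $Y = \Lambda_S$ with the compact horizontal support condition finishes the argument, in direct parallel with the norm isomorphism used in Theorem \ref{thm:46}.

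I expect the main obstacle to be the gluing property for $W_c$ in the second paragraph, since $W_c$ is defined globally on the possibly singular space $\overline{T^* X}$: one must establish a Mayer--Vietoris/cosheaf statement for Borel--Moore homology with compact horizontal support on coarse Whitney Lagrangians, while simultaneously passing to the direct limit over the Lagrangians $\Lambda$ and keeping careful track of the support condition. By contrast, the quotient-stack computation should be essentially formal once $w_X$ has been identified with $\pi_*$, as it then follows from the standard finite-quotient statement for rational Borel--Moore homology.
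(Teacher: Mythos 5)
Your proposal is correct and takes essentially the same approach as the paper: reduce to the quotient-stack case $X = M/G$ by viewing $w$ as a map of cosheaves over the coarse space $\overline{X}$, then identify $\cL_c(X)$ with the coinvariants $\cL_c(M)_G$ via Example \ref{ex:quotient} and conclude by the transfer/norm isomorphism for finite quotients in rational Borel--Moore homology. The paper is just as terse as you are about the gluing property of $W_c$ that you flag as the main obstacle, and its quotient-stack step is precisely your ``standard fact,'' proved there by the transfer map $\overline{Z} \mapsto \sum_{Z' \in \pi^{-1}(Z)} Z'$ whose composite with the pushforward is the norm.
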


\begin{proof}

If $\overline{U}$ is an open subset of the coarse space and $U$ is the corresponding open substack of $X$, we may consider the map $w_U:\cL_c(U) \to W_c(U)$.  The maps $w_U$ and $w_X$ commute with the extension-by-zero maps $\cL_c(U) \to \cL_c(X)$ and $W_c(U) \to W_c(X)$.  In other words, we may regard $w$ as a map of cosheaves on $\overline{X}$.  We may therefore reduce to the case when $X$ is a quotient stack $M/G$.  

In that case by Example \ref{ex:quotient} it suffices to show that the map $\cL_c(M) \to W_c(X)$ identifies $W_c(X)$ with the $G$-coinvariants of $\cL_c(M)$.  Let us denote the map $M \to \overline{X}$ by $\pi$.  Each $G$-invariant stratification $S$ of $M$ determines a $G$-invariant conic Lagrangian $\Lambda_S \subset T^*M$ and a coarse Whitney Lagrangian $\overline{\Lambda}_S \subset \overline{T^* X}$.  Since $G$-invariant stratifications are cofinal among all stratifications of $M$, we are reduced to showing that
$$H_n^\BM(\Lambda_S) \to H_n^\BM(\overline{\Lambda}_S)$$
identifies the codomain with the $G$-coinvariants of the domain. 

Since $\Lambda_S \to \overline{\Lambda}_S$ is a finite map, we may construct a ``transfer'' map $H_n^\BM(\overline{\Lambda}_S) \to H^\BM_n(\Lambda_S)$, that takes $\overline{Z}$ to $\sum_{Z' \in \pi^{-1}(Z)} Z'$.  We complete the proof by noting the composite $H_n^\BM(\Lambda_S) \to H_n^\BM(\overline{\Lambda}_S) \to H_n^\BM(\Lambda_S)$ coincides with the norm map.   
 
\end{proof}

\section{Refinement of Behrend's Theorem}\label{refinedbehrend}

We explain how to use the results here to remove some of the hypotheses in Behrend's work.  We will give a brief review of his definitions, but refer to \cite{Behrend} for a more complete picture.  Note that what we have been calling $\int_X f$, Behrend denotes by $\chi(X,f)$.

Let us first make some remarks about algebraically constructible functions.  If $U$ is a complex algebraic variety, let $\Fun^\alg(U)$ denote the group of $\bQ$-valued functions that are constructible with respect to a complex algebraic stratification.  If $Y$ is a complex algebraic orbifold let $Y_e$ denote the category of \'etale maps $U \to Y$ where $U$ is representable, with its natural Grothendieck topology.  $\Fun^\alg$ forms a sheaf on $Y_e$.  Let $\Fun^\alg(Y)$ denote its global sections.  

\begin{proposition}
\label{prop:cac}
The pullback map $\Fun^{\alg}(\overline{Y}) \to \Fun^{\alg}(Y)$ is an isomorphism.
\end{proposition}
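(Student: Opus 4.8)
The plan is to mimic the proof of Theorem \ref{thm:46}, replacing the cosheaf/coinvariants bookkeeping by the sheaf/invariants bookkeeping appropriate to $\Fun^\alg$. First I would observe that both $\overline{V} \mapsto \Fun^\alg(\overline{V})$ and $\overline{V} \mapsto \Fun^\alg(\overline{V} \times_{\overline{Y}} Y)$ are sheaves on the coarse space $\overline{Y}$, and that the pullback map $p^*$ is a morphism between them. Since being an isomorphism is local on $\overline{Y}$, and since a Deligne--Mumford stack is \'etale-locally on its coarse space a quotient stack (the algebraic analogue of Remark \ref{rem:maketwo}), I may reduce to the case $Y = M/G$ with $M$ a quasi-projective variety carrying an algebraic action of a finite group $G$ and $\overline{Y} = M /\!\!/ G$.

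In that case the composite $M \to Y \to \overline{Y}$ is the finite surjective quotient map $\pi:M \to M/\!\!/G$. The atlas $M \to Y$ is a representable \'etale cover with $M \times_Y M \cong \bigsqcup_{g \in G} M$, the two projections onto the $g$-component being the identity and the action of $g$; the sheaf condition for $\Fun^\alg$ applied to this cover therefore identifies $\Fun^\alg(Y)$ with the equalizer, i.e. with the $G$-invariants $\Fun^\alg(M)^G$ (this is the sheaf counterpart of the coinvariants computation of Example \ref{ex:quotient}). Under this identification $p^*$ becomes precisely the pullback $\pi^*:\Fun^\alg(\overline{Y}) \to \Fun^\alg(M)^G$, so it remains to check that $\pi^*$ is an isomorphism onto the invariants.

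Injectivity is immediate because $\pi$ is surjective, and the image plainly lands in $\Fun^\alg(M)^G$ because $\pi$ is $G$-invariant. For surjectivity, take a $G$-invariant algebraically constructible function $f$ on $M$; since the fibers of $\pi$ are exactly the $G$-orbits and $f$ is $G$-invariant, $f$ descends set-theoretically to a function $\bar f$ on $\overline{Y}$ with $\pi^* \bar f = f$. The only thing to verify is that $\bar f$ is again algebraically constructible. I would choose a $G$-invariant algebraic stratification of $M$ along which $f$ is constant (possible by refining any trivializing stratification to a $G$-invariant one, exactly as in the discussion at the start of Section 2), note that the images of its strata under $\pi$ stratify $\overline{Y}$, and observe that $\bar f$ is constant along them.

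The main obstacle is this last constructibility statement --- that the descent $\bar f$ of a $G$-invariant constructible function is constructible on the coarse quotient. This rests on the facts that for a finite group acting on a quasi-projective variety the quotient $M/\!\!/G$ exists as a variety and $\pi$ is finite, hence closed, so that images of $G$-invariant constructible sets are again constructible; once this is in hand the descent of the stratification, and hence of $f$, is routine. I expect the remaining verifications --- that the local identifications are compatible with restriction, so that the locally defined inverse glues --- to be formal, following the same pattern as the gluing arguments in the proofs of Theorems \ref{thm:46} and \ref{thm:LW}.
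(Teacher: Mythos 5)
Your proposal is correct and follows essentially the same route as the paper: reduce to the quotient-stack case $Y = M/G$ by localizing on the coarse space, then identify $\Fun^{\alg}(Y)$ with the $G$-invariant algebraically constructible functions $\Fun^{\alg}(M)^G$ via the atlas $M \to Y$. The paper's proof is in fact terser, stopping after that identification; your final two paragraphs (injectivity/surjectivity of $\pi^*$ and constructibility of the descended function via finiteness of $\pi$) simply fill in details the paper leaves implicit.
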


\begin{proof}
As in the proof of Theorem \ref{thm:46}, we may reduce to the case where $Y$ is a quotient stack $M/G$, in which case $\Fun^\alg(Y)$ can be naturally identified with the group of $G$-invariant algebraically constructible functions on $Y$.  
\end{proof}

If $Y$ is proper, we may therefore use Theorem \ref{thm:46} to identify $\Fun^\alg(Y)$ with a subgroup of $\Fun_c(Y)$.  In particular, we can construct elements of $\Fun_c(Y)$ by constructing elements of $\Fun^\alg(Y)$ \'etale locally.  

Similar remarks apply to $\cL_c(Y)$.  Write $\cL^\alg(X) \subset \cL(X)$ for the group of conic Lagrangian cycles that are supported on the conormal variety of some algebraic stratification of $U$.  The assignment $U \mapsto \cL^\alg(U)$ is contravariant for \'etale maps and forms a sheaf on $X_e$.  Let $\cL^\alg(X)$ denote the global sections of this sheaf.  Let $W^\alg(X)$ denote the subgroup of $W(X)$ consisting of cycles whose inverse image along every map $U \to X$ of $X_e$ is algebraic, or equivalently whose support (as a subset of the algebraic space $\overline{T^* X}$) is algebraic.  The proof of Proposition \ref{prop:cac} may be repeated to establish

\begin{proposition}
\label{prop:cac2}
The pullback map $W^\alg(X) \to \cL^\alg(X)$ is an isomorphism.
\end{proposition}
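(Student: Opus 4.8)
The plan is to mimic, essentially verbatim, the structure of the proof of Proposition \ref{prop:cac}, replacing the cosheaf $\Fun_c$ with $\cL_c$ and the target $\Fun^\alg(\overline{X})$ with $W^\alg(X)$. First I would observe that both sides of the claimed isomorphism are defined by gluing local data along the \'etale site $X_e$: the group $\cL^\alg(X)$ is by construction the global sections of the sheaf $U \mapsto \cL^\alg(U)$, while $W^\alg(X)$ is carved out of $W(X)$ by an \'etale-local algebraicity condition. Since the map $W^\alg(X) \to \cL^\alg(X)$ in question is the restriction of the map $w_X$ from Theorem \ref{thm:LW}, and since $w_X$ is already known to be an isomorphism after passing to the full (subanalytic) groups, the content here is really just to check that $w_X$ matches up the algebraic subgroups on the two sides. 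I would make this precise by the same local-to-global reduction used before.

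The key reduction is to quotient stacks. Using Remark \ref{rem:maketwo}, I would cover $\overline{X}$ by charts $\overline{U}_i$ over which $U_i := \overline{U}_i \times_{\overline{X}} X$ is a quotient stack $M_i/G_i$, and note (as in the proof of Theorem \ref{thm:LW}) that the map $w$, restricted to algebraic cycles, is a map of sheaves on $\overline{X}$ compatible with the \'etale gluing on both sides. Hence it suffices to treat the case $X = M/G$. In that case Example \ref{ex:quotient} identifies $\cL_c(X)$ with the $G$-coinvariants of $\cL_c(M)$, and the algebraic subgroup $\cL^\alg(X)$ with the $G$-coinvariants of $\cL^\alg(M)$; on the other side, $W^\alg(X)$ is identified with the $G$-invariants of $\cL^\alg(M)$, precisely as in Proposition \ref{prop:cac}. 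The map $w_X$ restricts to the norm map $\cL^\alg(M)_G \to \cL^\alg(M)^G$ by the transfer computation at the end of the proof of Theorem \ref{thm:LW}, and the norm map is an isomorphism for every rational vector space with a $G$-action.

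The main technical point to verify, and the step I expect to require the most care, is that the algebraicity condition defining $W^\alg(X)$ and $\cL^\alg(X)$ is genuinely preserved and reflected by $w_X$ in the quotient-stack case. Concretely, one must check that a $G$-invariant conic Lagrangian cycle on $T^*M$ is supported on an \emph{algebraic} (not merely subanalytic) stratification's conormal variety if and only if its image cycle in $\overline{T^*X}$ is algebraic as a subset of the algebraic space $\overline{T^*X}$. This is where the equivalence ``algebraic on every \'etale chart $\Leftrightarrow$ algebraic support downstairs'' flagged in the statement of Proposition \ref{prop:cac2} must be used; the finite map $\Lambda_S \to \overline{\Lambda}_S$ is algebraic, so algebraicity passes both up along pullback and down along pushforward, and the transfer map preserves the algebraic subgroups. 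Once this compatibility is in hand, the proof is literally the repetition of Proposition \ref{prop:cac} promised in the text.
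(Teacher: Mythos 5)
Your overall skeleton --- reduce to the quotient-stack case and compare the two sides there, checking that algebraicity passes both ways along the finite map $\Lambda_S \to \overline{\Lambda}_S$ --- is also the paper's skeleton (its proof is literally ``repeat Proposition \ref{prop:cac}''). But there is a genuine confusion in your middle paragraph about \emph{which map} you are proving to be an isomorphism, and it matters. The map in the statement goes $W^\alg(X) \to \cL^\alg(X)$: it takes a coarse cycle to the compatible system of its inverse images on the charts of $X_e$. This is a transfer/pullback map, and it is \emph{not} a restriction of $w_X$ from Theorem \ref{thm:LW}, which goes in the opposite direction, $\cL_c(X) \to W_c(X)$. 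Relatedly, $\cL^\alg(X)$ is by definition the global sections of a \emph{sheaf} on $X_e$, so in the quotient case $X = M/G$ it is the group of $G$-\emph{invariants} $\cL^\alg(M)^G$, not the coinvariants; Example \ref{ex:quotient} computes sections of the \emph{cosheaves} $\Fun_c$ and $\cL_c$ and does not apply to $\cL^\alg$. This is exactly the distinction between Theorem \ref{thm:46} (where the answer is coinvariants and the comparison map is the norm) and Proposition \ref{prop:cac} (where both sides are invariants and no norm map appears); the proposition you are asked to prove is the analogue of the latter, not the former.

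The upshot is that what your argument actually establishes in the quotient case is that the norm map $\cL^\alg(M)_G \to \cL^\alg(M)^G$ is an isomorphism --- true, but a statement about a different map. To prove the proposition one must still show that the actual pullback map $W^\alg(M/G) \to \cL^\alg(M)^G$ is an isomorphism: injectivity is clear since the charts cover, and for surjectivity one checks that every $G$-invariant algebraic cycle $\eta$ on $T^*M$ is the inverse image of a coarse cycle, e.g.\ of $\overline{\eta} = \frac{1}{|G|}\pi_*\eta$, whose support --- the image of an algebraic set under the finite algebraic map $T^*M \to \overline{T^*X}$ --- is again algebraic. Your final paragraph contains precisely this needed observation about algebraicity being preserved up and down the finite map, so the repair is short; but as written, the invariants/coinvariants bookkeeping and the direction of the map are wrong, and the norm-map conclusion does not by itself yield the stated proposition.
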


Because of this and Theorem \ref{thm:LW}, when $X$ is proper we may identify $\cL^\alg(X)$ with a subspace of $\cL_c(X)$.

We now explain the application to symmetric obstruction theories.
Let $X$ be a quasiprojective Deligne-Mumford stack over $\bC$.  
A perfect obstruction theory on $X$ is a perfect complex $E \in D^b_{\mathrm{coh}}(X)$ in the derived category of coherent sheaves,
equipped with a map to the cotangent complex
$$\phi: E \rightarrow L_{X}$$
such that $E$ has cohomology only in degrees $[-1,0]$ and such that $\phi$ induces a surjection in degree $-1$ and an isomorphism in
degree $0$.  The obstruction theory is symmetric if, in addition, there exists
an isomorphism
$$\theta: E \rightarrow E^{\vee}[1]$$
such that $\theta^{\vee}[1] = \theta$.
Symmetric obstruction theories arise naturally when studying moduli spaces of stable sheaves on Calabi-Yau threefolds.  The symmetric structure is induced from Serre duality.  

If $X$ carries a symmetric perfect obstruction theory, it also has a virtual fundamental class in degree $0$
$$[X]^{\mathrm{vir}} \in A_0(X),$$
constructed using the data of $(E,\phi)$ above.
Here, $A_0(X)$ denotes the Chow group of zero-cycles on $X$,
In the geometric setting of sheaves on Calabi-Yau threefolds, the degrees of these virtual classes for varying choice of numerical invariants
define the Donaldson-Thomas theory of the threefold.

In \cite{Behrend}, for an arbitrary Deligne-Mumford stack $X$, Behrend also defines a constructible function
$$\nu_X \in \Fun^\alg(X)$$
using the intrinsic normal cone of $X$.  
Behrend uses the index formula for manifolds to prove the following theorem, under the additional conditions that $X$ is either smooth, or else a global finite quotient, or else a gerbe over a scheme. 

\begin{theorem}
If $X$ is proper, then
$$\deg [X]^{\mathrm{vir}} = \chi(X, \nu_X).$$
\end{theorem}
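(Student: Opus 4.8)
The plan is to factor Behrend's identity through the orbifold index formula, so that the only genuinely global step becomes an application of Theorem \ref{thm:bdkorb} and everything else is either \'etale-local or a reuse of Behrend's constructions. Since $X$ is proper, the Behrend function $\nu_X \in \Fun^\alg(X)$ may be regarded as an element of $\Fun_c(X)$ via Proposition \ref{prop:cac} and Theorem \ref{thm:46}, and by the notational convention $\chi(X,\nu_X) = \int_X \nu_X$. Applying the orbifold index formula of Theorem \ref{thm:bdkorb} to $f = \nu_X$ gives
$$\chi(X,\nu_X) = \int_X \nu_X = \CC(\nu_X) \cap \zeta.$$
The whole theorem therefore reduces to the single identity $\deg[X]^{\mathrm{vir}} = \CC(\nu_X) \cap \zeta$.

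To establish this identity I would recall Behrend's local analysis of a symmetric obstruction theory. \'Etale-locally on $X$ the symmetric structure presents the intrinsic normal cone as a conic Lagrangian cycle $L_X$ in the cotangent bundle, algebraic and with compact horizontal support, so that $L_X \in \cL^\alg(X)$; Behrend's normalization of $\nu_X$ is precisely the one for which $\CC(\nu_X) = L_X$ on each representable chart. Because $\CC$, $\nu_X$, and $L_X$ are all sections of sheaves on $X_e$ that agree on charts, Proposition \ref{prop:cac2} and Theorem \ref{thm:LW} promote this to the global equality $\CC(\nu_X) = L_X$ in $\cL_c(X)$. On the other hand, the virtual class is by construction the intersection of this same cone with the zero section, computed in the (rational) Chow group $A_0(X)$; the symmetric structure is exactly what guarantees the relevant cone is Lagrangian, so that this intersection is the one appearing in the index formula. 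Thus I would argue $\deg[X]^{\mathrm{vir}} = L_X \cap \zeta = \CC(\nu_X)\cap\zeta$, closing the loop.

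The hard part will be the last comparison: reconciling the stack-theoretic virtual degree, defined via the cycle-theoretic Gysin pullback in $A_0(X)\otimes\bQ$, with the topological intersection pairing $L_X \cap \zeta$ supplied by the cosheaf formalism and ultimately computed on the coarse space $\overline{T^*X}$. Both are rational numbers, and the factors $1/|G_x|$ entering the inertia weighting $\iota$, together with the norm-map and $G$-coinvariant identifications of Theorems \ref{thm:46} and \ref{thm:LW}, are precisely what must be tracked to see that the two rationals coincide rather than merely agreeing up to a universal constant. It is worth emphasizing that this is the only place Behrend needed the index formula, and that his restriction to smooth stacks, global quotients, or gerbes over schemes stemmed solely from the limited availability of that formula; substituting Theorem \ref{thm:bdkorb} removes the restriction while leaving the rest of his argument intact.
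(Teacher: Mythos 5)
There is a genuine gap, and it appears in your very first step. You apply Theorem \ref{thm:bdkorb} directly to $X$, writing $\int_X \nu_X = \CC(\nu_X)\cap\zeta$ with $\CC(\nu_X)$ a cycle in ``the cotangent bundle'' of $X$. But a stack $X$ carrying a symmetric obstruction theory is in general \emph{singular} (that is the whole point of virtual classes), and every piece of machinery in the paper --- $\Fun_c(X)$, $\cL_c(X)$, $T^*X$, $\CC$, $\cap\zeta$, and the orbifold index formula itself --- is defined only for \emph{smooth} real analytic orbifolds (Definition \ref{def:funcorb} and Theorem \ref{thm:bdkorb} presuppose an orbifold built from charts that are manifolds). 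Likewise, your \'etale-local claim that ``the symmetric structure presents the intrinsic normal cone as a conic Lagrangian cycle $L_X$ in the cotangent bundle'' conflates two different spaces: in Behrend's construction the cone lives in $\Omega_M|_X = T^*M|_X$ for a local embedding $X\hookrightarrow M$ into a \emph{smooth} space, never in a cotangent bundle of $X$ itself, which does not exist as a bundle. So the identity you reduce everything to, $\deg[X]^{\mathrm{vir}} = \CC(\nu_X)\cap\zeta$, does not typecheck within the paper's framework.

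The missing idea, which is the first line of the paper's proof, is to choose a closed embedding $X\hookrightarrow Y$ into a \emph{smooth and proper} Deligne--Mumford stack $Y$ (possible because $X$ is quasiprojective), extend $\nu_X$ by zero to an element of $\Fun^\alg(Y)$, and then run the whole argument on $Y$: Propositions \ref{prop:cac} and \ref{prop:cac2} place $\nu_X$ in $\Fun_c(Y)$ and $\CC(\nu_X)$ in $\cL^\alg(Y)\subset\cL_c(Y)$; Behrend's Proposition 4.16 (which already compares the Chow-theoretic Gysin pullback with the characteristic cycle in $T^*Y$) gives $[X]^{\mathrm{vir}} = 0^!\CC(\nu_X)$; and Theorem \ref{thm:bdkorb} applied to the smooth orbifold $Y$ yields $\deg 0^!\CC(\nu_X) = \zeta_Y\cap\CC(\nu_X) = \chi(X,\nu_X)$. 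Note also that the comparison you defer to the end as ``the hard part'' is not something to be reproved by tracking $1/|G_x|$ factors: it is exactly what the citation of Behrend's Proposition 4.16 supplies, once one is working in the cotangent bundle of the smooth ambient $Y$. Your closing remark --- that Behrend's restrictions stemmed solely from the unavailability of the orbifold index formula --- is correct, but the proof needs the ambient smooth embedding to make that substitution meaningful.
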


\begin{proof}
Choose a closed embedding
$$X \hookrightarrow Y$$
into a smooth and proper Deligne-Mumford stack $Y$.
We may regard $\nu_X$ as an element of $\Fun^\alg(Y)$ via extension-by-zero.  Since $Y$ is proper we may by Proposition \ref{prop:cac} identify $\nu_X$ with an element of $\Fun_c(Y)$, with associated characteristic cycle $\CC(\nu_Y) \in \cL_c(Y)$.  The restriction of $\CC(\nu_Y)$ to a representable \'etale open set $U \to Y$ belongs to $\cL^\alg(Y)$, so by Proposition \ref{prop:cac2} and the properness of $Y$ we have $\CC(\nu_Y) \in \cL^\alg(Y) \subset \cL_c(Y)$.
It follows from \cite[Proposition 4.16]{Behrend} that
$$[X]^{\mathrm{vir}} = 0^{!}\CC(\nu_X) \in A_0(X)$$
Here $0^!$ denotes refined (to the Chow group) intersection with the zero section of $T^*M$.
Using Theorem \ref{thm:bdkorb}, we have
$$ \deg [X]^{\mathrm{vir}}= \deg 0^! \CC(\nu_X) = \zeta_Y \cap \CC(\nu_X) = \chi(X, \nu_X).$$
\end{proof}


\begin{thebibliography}{99}
\bibitem[B]{Behrend} K. Behrend, ``Donaldson-Thomas type invariants via microlocal geometry,'' Ann. of Math. {\bf 170} (2009), 1307--1338.
\bibitem[BGNX]{BGNX} K. Behrend, G. Ginot, B. Noohi and P. Xu, ``String topology for stacks,'' {\sf http://arxiv.org/abs/0712.3857v2}
\bibitem[Br]{Br} 
  T. Bridgeland, ``Hall algebras and curve-counting invariants,'' J. Amer. Math. Soc. {\bf 24} (2011), 969--998

\bibitem[BDK]{BDK} J. Brylinski, A. Dubson, and M. Kashiwara, ``Formule de l'indice pour modules holonomes et obstruction d'Euler locale,'' C. R. Acad. Sci. Paris S\'er. I Math. {\bf 293} (1981), 573--576.

\bibitem[JS]{JS} D. Joyce and Y. Song, ``A theory of generalized Donaldson-Thomas invariants,'' {\sf http://arxiv.org/abs/0810.5645}
\bibitem[K]{K} M. Kashiwara, ``Index theorem for constructible sheaves.''
Differential systems and singularities (Luminy, 1983).
Ast\'erisque {\bf{130}} (1985), 193--209. 
\bibitem[KS1]{KS} M. Kashiwara and P. Schapira,``Sheaves on Manifolds,'' Springer-Verlag Grundlehren Math. {\bf 292} (1990)
\bibitem[KS2]{KS2} M. Kashiwara and P. Schapira, ``Ind sheaves,'' Ast\'erisque {\bf 271} (2001)
\bibitem[NZ]{NZ} D. Nadler and E. Zaslow, ``Constructible sheaves and the Fukaya category,'' J. Amer. Math. Soc. {\bf 22} (2009), 233--286



\end{thebibliography}
\end{document}